\documentclass[10pt,reqno]{amsart}

\usepackage{amsthm,amsmath,amsfonts,amssymb}
\usepackage{bm}
\usepackage[dvipdfmx]{graphicx}
\usepackage{xcolor}
\usepackage{fullpage}

\usepackage{verbatim}
\usepackage{algorithm}
\usepackage{algpseudocode}

\usepackage{caption}
\usepackage{float}
\usepackage{subcaption}

\usepackage{tikz}
\usetikzlibrary{calc, positioning}
\usetikzlibrary{decorations.markings}
\usetikzlibrary{quotes,angles}
\usetikzlibrary{arrows.meta}
\usetikzlibrary{intersections}
\usetikzlibrary{graphs}

\usepackage{appendix}
\usepackage[sort&compress, numbers]{natbib}

\usepackage{hyperref}					
\hypersetup{colorlinks,					
	linkcolor=blue,
	citecolor=red}

\usepackage{amsaddr}
\usepackage{mathrsfs}

\newtheorem{theorem}{Theorem}

\newtheorem{lemma}{Lemma}

\theoremstyle{definition}
\newtheorem{definition}{Definition}
\newtheorem{remark}{Remark}

\def\R{\mathbb{R}}
\def\E{\mathbb{E}}

\def\N{\mathbb{N}}

\def\AA{\mathcal{A}}
\def\BB{\mathcal{B}}

\def\EE{\mathcal{E}}

\def\LL{\mathcal{L}}

\def\NN{\mathcal{N}}

\def\LD{\mathscr{D}}
\def\PLDR{\mathcal{M}} % Cartesian Product of Linear Differential Operator and Euclidean Space

\def\net{f}
\def\hyperbias{\zeta} % hyperparameter coefficient of bias term

\newcommand{\ol}{\overline}
\newcommand{\wt}{\widetilde}

\renewcommand{\d}{\mathrm{d}}

\def\llan{\left\langle}
\def\rran{\right\rangle}

\allowdisplaybreaks

\title{On the extreme eigenvalues of the Gram Matrix in Physics-Informed Neural Networks for the Poisson Equation}
\author{ Bangti Jin\and Longjun Wu} 
\address{ Department of Mathematics, The Chinese University of Hong Kong, Shatin, N.T., Hong Kong, P.R. China }

\begin{document}

\begin{abstract}
The smallest and largest eigenvalues of the Gram matrix induced by the differential neural tangent kernel (DNTK) play a pivotal role in the analysis of over-parameterized PINNs trained by gradient type algorithms. However, a theoretical analysis of the extreme eigenvalues remains completely absent due to the challenge posed by the presence of multiple differential operators.
In this work, we provide explicit lower and upper bounds for the extreme eigenvalues of the infinite DNTK matrix for the Poisson equation with the Dirichlet boundary condition for two-layer RePU neural networks without the bias term. The setting is fairly general with respect to the sampling points and input dimension \(d\): \(\delta\)-separated and additionally \(d\geq 3\) when deriving the lower bound of the smallest eigenvalue. These results extend that for the neural tangent kernel, and to the best of our knowledge, represent the first results on the spectrum of the DNTK.

\noindent\textbf{Key words}: differential neural tangent kernel, physical informed neural network, eigenvalue, Gram matrix, lower bound, upper bound
\end{abstract}

\maketitle

\section{Introduction}

Physics-informed neural networks (PINNs) \cite{Raissi:2019PINN} (see also \cite{Lagaris1998,Sirignano:2018DGM} for related developments) are based on the principle of PDE residual minimization, and utilize neural networks (NNs) as the ansatz space for approximating the unknowns. Currently they represent one very popular class of neural PDE-solvers and have shown tremendous success in solving a diverse array of direct problems \cite{HuJinZhou:2024,JinCai:2021,Sirignano:2018DGM} and inverse problems \cite{JinLiQuan:2024,CenJinLi:2025}; see the review \cite{ToscanoKarniadakis:2025} and the references therein for further pointers. Due to these impressive empirical successes in many areas, there has been significant interest in developing relevant mathematical theory for PINNs \cite{DeRyckMishra:2024}. One notoriously challenging theoretical issue is the convergence analysis of various training algorithms, due to the strong nonconvexity of the (empirical) loss, arising from the nonlinearity of the NN output with respect to the NN parameters.
One powerful theoretical tool to analyze the training dynamics of over-parameterized NNs is  
the neural tangent kernel (NTK) theory \cite{Jacot:2018NTK}. This idea has recently been successfully generalized to PINNs, leading to the so-called
differential neural tangent kernel (DNTK) \cite{JinWu:2026DNTK}, 
to analyze the convergence of PINN training in the kernel regime. Specifically, in the regime of wide (two-layer) NNs, the training dynamics of PINNs trained by gradient type algorithms can be approximated by the kernel regression with the Gram matrix induced by the DNTK \cite{Chizat:2019Lazy} and then the positivity of the Gram matrix ensures that the training algorithm can find a global minimum of the loss in high probability.
So far, several global convergence results for wide two-layer PINNs have been established using the DNTK framework, e.g., gradient descent \cite{Gao:2023GD}, 
implicit gradient descent \cite{Xu:2024IGD} and natural gradient descent \cite{Xu:2024NGD} (all for the heat equation), and stochastic gradient descent and stochastic gradient flow for the Poisson equation \cite{JinWu:2025}.

In the optimization of wide PINNs, the spectrum of the DNTK Gram matrix plays a fundamental role, which also greatly affects the training efficiency. The positivity of the associated infinite Gram matrix ensures the convergence guarantee  \cite{GanLiLin:2025NTK}, while the smallest eigenvalue \cite{Gao:2023GD,Xu:2024IGD,JinWu:2025} and also largest eigenvalue \cite{Xu:2024NGD} determines the convergence rate of the training algorithms. Moreover, the spectrum, particularly extreme eigenvalues, are closely related to the spectral bias phenomenon during the PINN training by gradient type algorithms \cite{Chai2024:overcoming,Seroussi:2024,Faroughi:2025}, in which the algorithms predominantly capture low-frequency components while struggling to learn high-frequency ones. 
Recently Cheng et al \cite{Cheng:2026consistency} establish a necessary and sufficient condition for the consistency of the PINNs, characterized by the reproducing kernel Hilbert space of an integral operator induced by the DNTK. All the aforementioned discussions highlight the importance of a theoretical analysis of the spectrum of the DNTK in the mathematical analysis for PINNs. 

The spectral analysis of the DNTK is very involved due to the presence of multiple differential operators (i.e., PDE operator and initial / boundary conditions) in the DNTK, when compared that for the NTK. First, the structure of the DNTK-induced Gram matrix varies from one PDE to another, which considerably complicates a unified treatment of the Gram matrix of PINNs. Moreover, even for a fixed PDE, the entries of the Gram matrix take different forms depending on the involved operators, with the overall complexity depending heavily on the specific PDE at hand \cite{JinWu:2026DNTK}. 
Thus so far, the theoretical treatment of the DNTK spectrum remains quite scarce. Jin and Wu \cite{JinWu:2026DNTK} proved the positivity of the smallest eigenvalue of the DNTK matrices for smooth non‑polynomial and RePU activations, under the assumption that the sampling points are either pairwise distinct or pairwise non‑parallel depending on whether a bias term is present. Gan et al \cite{GanLiLin:2025NTK} studied the DNTK for the PINN loss with one single differential operator and proved that DNTK will not induce a faster eigenvalue decay rate or stronger spectral bias when compared with that of NTK. However, the setting in \cite{GanLiLin:2025NTK} greatly simplifies practical PINNs since the PDEs always involve at least two operators. This discrepancy highlights a key challenge: the analysis of the DNTK is substantially more complicated when multiple operators are present.
To the best of our knowledge, there is still no quantitative analysis of the spectrum of the DNTK-induced Gram matrix. 
Nonetheless, there are several empirical studies on  the spectrum of the DNTK matrices \cite{Wang:2022When,WangWang:2021eigen}, and several methods have been proposed to overcome the spectral bias for the training of PINNs \cite{Cai:2020phase,XuZhangCai:2025}. 

In this work, we derive explicit lower and upper bounds on the smallest and largest eigenvalues of the infinite DNTK matrix for two-layer bias-free neural networks with the RePU activation for solving the Poisson equation with the Dirichlet boundary condition; see Theorems \ref{T:bound-min} and \ref{T:bound-max} for the precise statements. In these results, the input dimension \(d\) and sampling points can be  arbitrarily given, subject only to the condition that the points are \(\delta\)-separated and additionally with \(d\geq 3\) in the lower bound of the smallest eigenvalue. These quantitative bounds shed insights into the role of the RePU activation order and the data distribution in shaping the DNTK spectrum. 
The key strategy to overcome the nonuniform expressions of the DNTK arising from distinct operators and the interior and boundary points is to transform the bounds for the eigenvalues into the infimum norm of certain functions defined on the unit sphere and then to use spherical harmonics to bound the derivatives of relevant quantities. The analysis techniques also apply to other symmetric differential operators, e.g., the Helmholtz and biharmonic equations, and Neumann boundary conditions on a sphere centered at the origin. To the best of our knowledge, these bounds represent the first quantitative results on the spectrum of the DNTK.

Now we briefly review the literature on eigenvalue estimates for NTK matrices, and compare the settings and theoretical findings. The work closest to ours is the work \cite{Karhadkar:2024Bounds}, in which, for the finite NTK matrix of deep ReLU networks, Karhadkar et al derived a lower bound for the smallest eigenvalues in terms of the distance between directions of sampling points and also upper bound in terms of the distance between sampling points, with \(d\geq3\) for lower bound. Despite the similarity of the characterizations of lower and upper bounds, there are several major differences in the assumption and analysis techniques. First, Karhadkar et al assume that the sampling points are from the unit sphere and then employ the hemisphere transform and spherical harmonics to derive a lower bound. In the DNTK case, there are different types of sampling points and differential operators, which does not naturally allow the unit sphere structure or apply the hemisphere transform. Instead, we exploit the symmetry of the differential operators and the homogeneity of the RePU activation to transform the estimate of the smallest eigenvalue to the norm of a spherical function. Second, we need to consider the minimum of sampling points and also the distance between sampling points for different types separately when estimating the largest eigenvalue in the DNTK case, since these situations are precisely the extreme cases that make the infinite DNTK matrix singular. Finally, the order of the lower bound for the smallest eigenvalue in Theorem \ref{T:bound-min} coincides with that in \cite{Karhadkar:2024Bounds} if \(s=0\) in Theorem \ref{T:bound-min}, but it requires \(s\geq 3\) in order to satisfy the regularity assumption due to the presence of second-order differential operators (i.e., the Laplacian). We also increase the order of the upper bound compared to that in \cite{Karhadkar:2024Bounds} since the infinite Gram matrix is different from the finite case. Moreover, our estimates are explicit without any implicit constant which gives a more explicit dependence of the bounds on the parameter setting. As to DNNs case, the bounds for the NTK matrices can be passed to deep neural networks based on the shallow case by using the recursive formula of the NTK. However, this strategy does not directly apply to the DNTK case since the recursive formula for DNTK is far more complicated and other techniques are needed for the eigenvalue estimate for the DNTK of DNNs.

There are also several works on the eigenvalue estimate for the NTK, although the settings are quite different from the present one. Nguyen et al. \cite{Nguyen:2021Tight} establish tight bounds on the smallest eigenvalue of the NTK matrices in probability sense for deep ReLU networks under both infinite-width limits and finite-width architectures with the data satisfying Lipschitz concentration property. However, these bounds are valid with high probability only if the input dimension grows logarithmically with the number of sampling points. Bombari et al. \cite{Bombari:2022} give a strictly positive lower bound on the minimum eigenvalue of NTK matrices for the DNNs with non-linear Lipschitz activations also under the Lipschitz concentration data distribution. The improvement also lies in the minimal over-parameterization such that the total number of neurons of the NN can be as little as \(\wt{\Omega}(\sqrt{N})\). However, the results also validated only in high-dimensional settings as they need \(N=o(d^2)\).
Montanari and Zhong \cite{MontanariZhong:2022} provide the lower bound of the minimum eigenvalue for the empirical NTK matrix for two-layer NN under the assumption that the training samples are uniformly drawn from the unit sphere and \(d=\wt{\Omega}(\sqrt[\ell]{N})\) for some integer \(\ell\). In contrast to the low-dimensional or fixed-input settings, their framework operates in an asymptotic regime where the input dimensionality \(d\) must grow at a rate with respect to the number $N$ of sampling points, which is also highly coupled with their architectural design constraints.
See also the relevant studies on the spectral analysis of NTK \citep{Bietti2019:inductive,Bietti:2021deep,Cao:2019towards} and its implications for optimization in practical scenarios \citep{CaoGu:2019,Zou:2020GD,ChenCao:2020}.

The rest of  this work is organized as follows. In Section \ref{S:DNTK}, we review the concept of (infinite) DNTK and describe the infinite Gram matrix associated with the PDEs. In Section \ref{S:Bound-eigen}, 
we give the main results, the bounds of extreme eigenvalues of infinite DNTK matrix. Throughout, the notation \([n]\) denotes the index set \(\{1,2,\ldots,n\}\) for any positive integer \(n\) and \(|\cdot|\) denotes the Euclidean norm of any vector \(x\in\R^d\). 

\section{The DNTK and Infinite Gram matrix}\label{S:DNTK}

In this section, we recall the DNTK framework \cite{JinWu:2026DNTK} for shallow neural networks (NNs), and describe the infinite Gram matrix induced by the DNTK in the context of PINNs for solving PDEs.

\subsection{Differential neural tangent kernel}
The DNTK \cite{JinWu:2026DNTK} generalizes the NTK \cite{Jacot:2018NTK} and integrates physical laws (in the form of differential operators) and NNs. Specifically, let \(\net(\cdot;\theta):\R^d\to\R\) be a fully connected NN with the parameter vector \(\theta\) initialized to independent and identically distributed (i.i.d.) standard Gaussian, and let \(\LD_d\) be the space of all linear differential operators acting on functions from \(\R^d\) to \(\R\). Then, in the DNTK framework, the output of the NN $f(\cdot;\theta)$ is a function on the Cartesian product space \(\PLDR_d:=\LD_d\times \R^d\):
\begin{equation}\label{E:DiffOut}
	\net:  \PLDR_d \to \R, \quad
	  (\AA, x) \mapsto \AA \net(x;\theta) 
	  := \AA_z \left. \net(z;\theta) \right|_{z=x}.
\end{equation}
Let \(x^\AA\) denote the point \((\AA, x)\in \PLDR_d\), called the \emph{\(\AA\)-marked point \(x\)}. 
The finite DNTK \(\Theta_{\net}\) of the NN \(\net(\cdot;\theta)\) is a kernel defined on the space \(\PLDR_d\) by
\begin{align*}
	\Theta_{\net}: \PLDR_d\times \PLDR_d \to \R,\quad  \left(x^\AA, y^\BB\right) \mapsto
	\llan \nabla_\theta \AA \net(x;\theta), \nabla_\theta \BB \net(y;\theta)  \rran.
\end{align*}
When the width of the NN tends to infinity and the activation $\sigma$ satisfies some mild 
regularity and growth assumptions, the NN output is a centered Gaussian process 
with the covariance function \(\Sigma_{\net}^\infty\) and the DNTK converges to a deterministic kernel 
\(\Theta_{\net}^\infty\), both on the space \(\PLDR_d\); see \cite{JinWu:2026DNTK} for more details. 
These two kernels are positive in the following sense: 
given any set of finite points \(X=\{x_1^{\AA_1}, \ldots,x_N^{\AA_N}\}\subset\PLDR_d\), the kernel matrices 
\[  \Sigma_{\net}^\infty(X) = \left[\Sigma_{\net}^\infty(x_i^{\AA_i},x_j^{\AA_j})\right]_{i,j\in[N]}
		\quad \text{ and } \quad 
	\Theta_{\net}^\infty(X) = \left[\Theta_{\net}^\infty(x_i^{\AA_i},x_j^{\AA_j})\right]_{i,j\in[N]} \]
are positive semidefinite. 
Moreover, if \(\net(\cdot;\theta)\) is a shallow neural network of the form
\begin{equation}\label{E:2-layerNN}
	f^{(2)}(x;\theta) = d_1^{-\frac12} W^{(2)} 
	\sigma ( W^{(1)}x + \gamma b^{(1)}), \quad x\in\R^d,
\end{equation}
with \( W^{(2)} \in \mathbb{R}^{1 \times d_1}, W^{(1)} \in \mathbb{R}^{d_1 \times d}, 
b^{(1)} \in \mathbb{R}^{d_1}\) and \(\gamma\) being the weight of the bias term, then the infinite covariance function \(\Sigma_{\net}^\infty\) is explicitly given by 
\begin{equation}\label{E:2-Cov}		\Sigma^{(2)}_{\infty}\left(x^\AA,y^\BB\right) = \E_{u\sim \mathcal{N}(0,I_d), v\sim \mathcal{N}(0,1)}
		\left[ \mathcal{A}\sigma\left(u^\top x +  \gamma v\right)\mathcal{B}\sigma\left(u^\top y +  \gamma v\right) \right] 
	 	+ \gamma^2 a(x)b(y),
	\end{equation} 
where $a$ and $b$ denote the zeroth-order terms of the operators $\mathcal{A}$ and $\mathcal{B}$, respectively. Meanwhile, the infinite DNTK \(\Theta_{\net}^\infty\) is given by
\begin{align}
	\Theta^{(2)}_{\infty}\left( x^\AA,y^\BB \right) 
		=& \E_{u\sim \mathcal{N}(0,I_d), v\sim \mathcal{N}(0,1)}
		\left[ \mathcal{A}\sigma\left(u^\top x + \gamma v\right)
		\mathcal{B}\sigma\left(u^\top y + \gamma v\right) \right] + \gamma^2 a(x)b(y) \nonumber\\
		&  + \E_{u\sim \mathcal{N}(0,I_d), v\sim \mathcal{N}(0,1)}
		\left[ \sum_{k=1}^{d}\mathcal{A}\left(\sigma'\left(u^\top x + \gamma v\right)x_k\right)
		\mathcal{B}\left(\sigma'\left(u^\top y + \gamma v\right)y_k\right)  \right] \nonumber\\
		&  + \gamma^2 \E_{u\sim \mathcal{N}(0,I_d), v\sim \mathcal{N}(0,1)}
		\left[ \mathcal{A}\sigma'\left(u^\top x + \gamma v\right)
		\mathcal{B}\sigma'\left(u^\top y + \gamma v\right)  \right].\label{E:2-DNTK}
	\end{align}
where $x_k$ and $y_k$ denote the \(k\)th component of the input points $x\in \Omega$ and $y\in\partial\Omega$, respectively. 

\subsection{Gram matrix induced by the DNTK}
The Gram matrix induced by the DNTK governs the training dynamics of PINNs. We illustrate the idea with the boundary value problems for second-order elliptic differential equations. Let $\Omega\subset \mathbb{R}^d$ be an open bounded domain with a smooth boundary $\partial\Omega$. 
Consider the following boundary value problem:
\begin{equation}\label{E:PDE}
	\left\{
	\begin{aligned}
		\LL u &=\phi, \quad \mbox{in } \Omega, \\
		\BB u&=\psi, \quad  \mbox{on }  \partial \Omega,
	\end{aligned}
	\right.
\end{equation}
where \(\LL\) and $\BB$ represent the PDE operator and the boundary operator (e.g., the identity operator for the Dirichlet boundary condition), respectively.
To approximate the PDE solution $u$ by the NN \(\net(\cdot;\theta):\R^d\to\R\), the PINN \cite{Raissi:2019PINN} minimizes the following empirical loss:
\begin{equation}\label{E:Emp_Loss} % empirical loss
	L(\theta) := \sum_{p=1}^{N_1} \frac{1}{2N_1}\left( \LL \net(x_p;\theta)-\phi(x_p)\right)^2
		 + \sum_{q=1}^{N_2} \frac{\hyperbias^2}{2N_2}\left( \BB \net(y_q;\theta)-\psi(y_p)\right)^2,
\end{equation}
where \(X = \{x_p\}_{p=1}^{N_1}\) and \(Y = \{{y}_q\}_{q=1}^{N_2}\) are the sampling points drawn 
from the domain $\Omega$ and the boundary $\partial\Omega$, respectively, and the hyperparameter \(\hyperbias>0\) balances the PDE residual loss and boundary loss, denoted by \(s_p(\theta)\) and \(h_q(\theta)\), respectively: 
\begin{gather*}
	s_p(\theta) = \alpha\left(\LL \net(x_p;\theta)-\phi(x_p)\right) \quad \text{ and } \quad
	h_q(\theta) = \beta\left(\BB \net(y_q;\theta)-\psi(y_p)\right),
\end{gather*}
with \(\alpha:=N_1^{-\frac12}\) and \(\beta := \hyperbias N_2^{-\frac12}\). 
Let \(s(\theta)\) and \(h(\theta)\) be the interior and boundary loss vectors:
\begin{gather*}
	s(\theta)=[s_1(\theta), \cdots, s_{N_1}(\theta)]^\top\in \mathbb{R}^{N_1} \quad	\text{ and } \quad
	h(\theta)=[h_1(\theta), \cdots, h_{N_2}(\theta)]^\top \in \mathbb{R}^{N_2}.
\end{gather*}

In practice, the PINN loss $L(\theta)$ is often minimized using gradient type algorithms, which is a suitable discretization of the gradient flow, which updates the parameter vector \(\theta\) as 
\begin{equation}\label{E:GF-emp} 
	\frac{\d\theta_t}{\d t} = - \nabla_\theta L(\theta_t)
	= - \sum_{p=1}^{N_1}  s_p(\theta_t) \nabla_\theta s_p(\theta_t) 
		- \sum_{q=1}^{N_2}  h_q(\theta_t) \nabla_\theta h_q(\theta_t).
\end{equation}
Then the dynamics of the loss vector is governed by the ODE system \cite{JinWu:2026DNTK}:
\begin{equation}\label{E:dyn-emp-G} 
	\frac{\d}{\d t}\begin{bmatrix} s(\theta_t) \\ h(\theta_t) \end{bmatrix} 
	= - G(\theta_t)\begin{bmatrix} s(\theta_t) \\ h(\theta_t) \end{bmatrix}, 
\end{equation}
with the Gram matrix $G(\theta_t)$ given by \(G(\theta_t) = D(\theta_t)^\top D(\theta_t) 
\in \mathbb{R}^{(N_1 + N_2)\times (N_1 + N_2)} \) such that
\begin{align*}
	D(\theta_t) & = \begin{bmatrix}
		 \nabla_\theta s_1(\theta_t)
		& \cdots 
		&  \nabla_\theta s_{N_1}(\theta_t) 
		&  \nabla_\theta h_1(\theta_t)
		& \cdots 
		&  \nabla_\theta h_{N_2}(\theta_t)
		\end{bmatrix}\\
        &= \begin{bmatrix} \nabla_\theta \alpha\LL \net(X;\theta_t) &
	 	\nabla_\theta \beta\BB \net(Y;\theta_t) \end{bmatrix}.
\end{align*}
Note that the Gram matrix \(G(\theta_t)\) is completely characterized by the DNTK: 
\begin{equation}\label{E:Gram-DNTK}
    G(\theta_t) = \Theta_{\net(\cdot;\theta_t)}\left( \{X^{\alpha\LL},Y^{\beta\BB} \} \right) =         \begin{bmatrix}
		\Theta_{\net(\cdot;\theta_t)}(X^{\alpha\LL},X^{\alpha\LL}) & \Theta_{\net(\cdot;\theta_t)}(X^{\alpha\LL},Y^{\beta\BB}) \\
		\Theta_{\net(\cdot;\theta_t)}(Y^{\beta\BB},X^{\alpha\LL}) & \Theta_{\net(\cdot;\theta_t)}(Y^{\beta\BB},Y^{\beta\BB})
	\end{bmatrix}, 
\end{equation}
where \(\Theta_{\net}(X^{\AA},Y^{\BB})\) denotes the matrix \([\Theta_{\net}(x_i^{\AA},y_j^{\BB})]_{i\in[N_1],j\in[N_2]}\in \R^{N_1\times N_2}\), corresponding to the two finite sets of sampling points \(X^{\AA}=\{x_1^{\AA},\ldots,x_{N_1}^{\AA}\}\) and \(Y^{\BB}=\{y_1^{\BB},\ldots,y_{N_1}^{\BB}\}\) that are marked by \(\AA\) and \(\BB\), respectively. 
When the hidden layers of the NN \(\net(\cdot;\theta)\) are wide enough, the initial Gram matrix \(G(\theta_0)\) approximates the infinite Gram matrix \(G^\infty=\Theta_{\net}^\infty\left( \{X^{\alpha\LL},Y^{\beta\BB} \} \right)\) due to the concentration property of Gaussian random variables, and then stays nearly constant in the lazy training regime. % due to the continuity of Frobenius norm of \(G(\theta_t)\). 

The spectral property of $G^\infty$ determines the dynamics of the ODE \eqref{E:GF-emp}. Let \(\{q_i\}_{i=1}^{N_1+N_2}\) be the orthogonal eigenvectors of the positive semidefinite matrix \(G^\infty\) with the  eigenvalues \(\{\lambda_i\}_{i=1}^{N_1+N_2}\). Then, using the orthogonal decomposition of  \(G^\infty\), the loss vector in \eqref{E:dyn-emp-G} for the infinitely wide PINNs evolves as follows: At the initialization it has a random norm in the direction \(q_i\) for any \(i\in[N_1+N_2]\), and then decays exponentially with the rate \(e^{-\lambda_i t}\) along the direction \(q_i\) (or along the \(\lambda_i\)-eigenspace if the multiplicity of \(\lambda_i\) is larger than one). Thus, the loss vector along the directions associated with larger eigenvalues decay faster than that with smaller eigenvalues, from which the spectral bias arises.

\section{Bounds on the extreme eigenvalues}\label{S:Bound-eigen}

In this section, we state lower and upper bounds on the extreme eigenvalues 
of the infinite DNTK matrix $\Theta^\infty_f$ for the Poisson equation with the Dirichlet boundary condition, for which
the operators $\LL$ and $\BB$ in \eqref{E:PDE} are given by 
\begin{equation}\label{E:Poisson}
\LL = \Delta \quad\mbox{and} \quad \BB = \EE,
\end{equation}
with $\EE$ being the identity operator. Consider two-layer NNs with the rectified power unit (RePU) activation function without the bias term (i.e., the NN \eqref{E:2-layerNN} with \(\gamma=0\)):
\begin{equation}\label{E:2-NN_No-bias}
	f(x;\theta) = d_1^{-1/2} W^{(2)} \sigma ( W^{(1)}x), \quad x\in\R^d,
\end{equation}
with the RePU activation \(\sigma(t) = \sigma_s(t) := (t)_+^s \),  where \((t)_+ = t\) for \(t\geq0\) and zero otherwise.
The covariance function \(\Sigma^{\infty}_{\net}\) \eqref{E:2-Cov} for the infinite-width NN \eqref{E:2-NN_No-bias} is given by 
\begin{equation}\label{E:2-Cov_no-bias}
	\Sigma^{\infty}_{\net}\left(x^\AA,y^\BB\right) = \E_{u\sim \mathcal{N}(0,I)}
	\left[ \AA\sigma(u^\top x)\BB\sigma(u^\top y) \right],
\end{equation}
while the infinite DNTK \(\Theta^{\infty}_{\net}\) is given by
\begin{align}
\Theta^{\infty}_{\net}\left( x^\AA,y^\BB \right) 
	& = \E_{u\sim \mathcal{N}(0,I)}
	\left[ \AA\sigma\left(u^\top x \right)\BB\sigma\left(u^\top y \right) +
	\sum_{k=1}^{d}\AA\left(\sigma'\left(u^\top x \right)x_k\right)
	\BB\left(\sigma'\left(u^\top y \right)y_k\right)  \right] \nonumber\\
	& = \Sigma^{\infty}_{\net}\left( x^\AA,y^\BB \right) + \sum_{k=1}^{d}\E_{u\sim \mathcal{N}(0,I)}
	\left[\AA\left(\sigma'\left(u^\top x \right)x_k\right)
	\BB\left(\sigma'\left(u^\top y \right)y_k\right)  \right].\label{E:2-DNTK_no-bias}
\end{align}
Below we assume \(s\geq 3\) so that the PINN loss is well defined (i.e., the PDE \eqref{E:PDE} holds in the strong sense) and also the gradient  of the loss with respect to $\theta$ is well defined. Also we study only the case \(d\geq 2\) (for $d=1$, $\partial\Omega$ contains only two points), and for the lower bound of the smallest eigenvalue, we require additionally \(d\geq 3\); see Remark \ref{R:bound-min} for further discussions.

Given the sampling points \(X_{I} = \{x_p\}_{p=1}^{N_1}\subset \Omega\) and \(X_{B} = \{{y}_q\}_{q=1}^{N_2}\subset\partial\Omega\), let \(N:=N_1+N_2\), 
\(X_{B} = \{x_{N_1+1},\ldots, x_{N_1+N_2}\}\), and   
\(X = X_{I} \sqcup X_{B} = \{x_\ell\}_{\ell=1}^{N} = \{x_1,\ldots, x_{N_1}, x_{N_1+1},\ldots, x_{N_1+N_2}\} \). 
Then for the Poisson equation \eqref{E:Poisson}, the covariance matrix \(\Sigma^\infty_{\net}(X)\in \R^{N\times N}\) is given by 
\begin{equation}\label{E:Sigma(X)}
	\Sigma^\infty_{\net}(X) = \begin{bmatrix}
	\Sigma^\infty_{\net}(X_{I},X_{I}) & \Sigma^\infty_{\net}(X_{I},X_{B}) \\
	\Sigma^\infty_{\net}(X_{B},X_{I}) & \Sigma^\infty_{\net}(X_{B},X_{B})
\end{bmatrix},
\end{equation}
where the submatrices \(\Sigma^\infty_{\net}(X_{I},X_{I})\in \R^{N_1\times N_1}\), \(
\Sigma^\infty_{\net}(X_{B},X_{B})\in \R^{N_2\times N_2}\) and 
\( \Sigma^\infty_{\net}(X_{I},X_{B}) = \Sigma^\infty_{\net}(X_{B},X_{I})^\top \in \R^{N_1\times N_2} \)
with the entries given by (cf. \eqref{E:2-Cov_no-bias})
\begin{align*}
	\Sigma^\infty_{\net}(x_i,x_j) & = \E_{u\sim \mathcal{N}(0,I)}
		\left[ \Delta\sigma(u^\top x_i)\Delta\sigma(u^\top x_j) \right],\quad \text{ for } i,j\in[N_1],\\ 
	\Sigma^\infty_{\net}(y_i,y_j) & = \E_{u\sim \mathcal{N}(0,I)}
		\left[ \sigma(u^\top y_i)\sigma(u^\top y_j) \right],\quad \text{ for } i,j\in[N_2],\\
	\Sigma^\infty_{\net}(x_p,y_q) & = \E_{u\sim \mathcal{N}(0,I)}
	\left[ \Delta\sigma(u^\top x_p)\sigma(u^\top y_q) \right],\quad \text{ for } p\in[N_1],q\in[N_2]. 
\end{align*}
The infinite DNTK matrix \(\Theta^\infty_{\net}(X)\) has a similar form
\begin{equation}\label{E:Theta(X)}
	\Theta^\infty_{\net}(X) = \begin{bmatrix}
	\Theta^\infty_{\net}(X_{I},X_{I}) & \Theta^\infty_{\net}(X_{I},X_{B}) \\
	\Theta^\infty_{\net}(X_{B},X_{I}) & \Theta^\infty_{\net}(X_{B},X_{B})
\end{bmatrix},
\end{equation}
where the entries are given by the formula \eqref{E:2-DNTK_no-bias} with the Laplace operator \(\Delta\) for the 
interior points \(X_I\) while the identity operator \(\EE\) for the boundary points \(X_B\).

\begin{remark}
The infinite DNTK matrix \(\Theta^\infty_{\net}(X) = \Theta^\infty_{\net}(\{X_I^{\Delta}, X_B^{\EE}\})\) in \eqref{E:Theta(X)} is slightly different from the infinite Gram matrix \(G^\infty(X)\) associated to the PINN loss \eqref{E:Emp_Loss}. 
Indeed, from the expression \eqref{E:Gram-DNTK}, \(G^\infty(X)\) should be \(\Theta^\infty_{\net}(\{X_I^{\alpha\Delta}, X_B^{\beta\EE}\})\) with 
	the constants \(\alpha\) and \(\beta\) arising from the empirical loss \eqref{E:Emp_Loss}. The Gram matrix \(G^\infty(X)\) is a diagonal scaling of DNTK matrix \(\Theta^\infty_{\net}(X)\). Below we discard the constants \(\alpha\) and \(\beta\). %, which also depends on the hyperparameter \(\hyperbias\) in \eqref{E:Emp_Loss}. 
\end{remark}

Now, we analyze the extreme eigenvalues of the matrices $\Sigma_f^\infty$ in \eqref{E:Sigma(X)} and $\Theta_f^\infty$ in \eqref{E:Theta(X)}. Below we sketch the derivation of a lower bound of the smallest eigenvalue  to illustrate the proof techniques used to cope with the nonuniform expression of entries in the DNTK matrix due to the presence of multiple operators for the eigenvalue estimates. The key tool in the analysis is spherical harmonics. The detailed proofs are in Section \ref{S-A:Bound-eigen}.
First, to ensure the positive definiteness of the matrices \(\Sigma^\infty_{\net}(X)\) and \(\Theta^\infty_{\net}(X)\), we need mild assumptions on \(X\) (due to the absence of the bias term). Suppose that \(X\) are nonzero and pairwise non-parallel \cite{JinWu:2026DNTK}. To quantify the latter property, we use the concept of \(\delta\)-separateness \cite{Karhadkar:2024Bounds,Allen:2019}. 

\begin{definition} A finite set of nonzero points \(X\) is said to be \emph{\(\delta\)-separated} for some \(\delta>0\) if 
\begin{equation}\label{E:delta-sepa}
	1 - \langle\ol{x}, \ol{y}\rangle^2 \geq \delta^2, \quad  \text{ for any } x\neq y\in X, \text{ with } 
	\ol{x} := x/|x|, ~ y:=y/|y|.
\end{equation}
\end{definition}

Let \(w = (w_1,\ldots,w_{N_1},w_{N_1+1},\ldots,w_N) =: (w^{I},w^{B}) 
\in \mathbb{S}^{N-1} \subset \R^N = \R^{N_1+N_2}\) be a test vector on $\mathbb{S}^{N-1}$. The expression \eqref{E:Sigma(X)} of the matrix \(\Sigma^{\infty}_{\net}(X)\)  (and the kernel expression \(\Sigma^{\infty}_{\net}\) in \eqref{E:2-Cov_no-bias}) motivates the induced norm:
\begin{equation}\label{E:Inner-Cov} % inner product by covariance matrix
	w^\top \Sigma^{\infty}_{\net}(X) w 
	= \E_{u\sim \NN(0,I_d)} \bigg[ \Big| \sum_{p=1}^{N_1} w^{I}_p \Delta\sigma(u^\top x_p) + 
	\sum_{q=1}^{N_2} w^{B}_q \sigma(u^\top y_q) \Big|^2 \bigg].
\end{equation}
Similarly, the norm induced by the DNTK matrix \(\Theta^{\infty}_{\net}(X)\) is given by
\begin{equation} \label{E:Inner-DNTK} % inner product by DNTK matrix
	w^\top \Theta^{\infty}_{\net}(X) w  = w^\top \Sigma^{\infty}_{\net}(X) w + \sum_{k=1}^{d}
	\E_{u\sim \NN(0,I_d)} \left[ \left| g_k(u;w) \right|^2 \right], 
\end{equation}
with the functions \(g_k(u;w)\) given by
\begin{equation} \label{E:g_k(u;w)}
	g_k(u;w) = \sum_{p=1}^{N_1} w^{I}_p \Delta\left(\sigma'\left(u^\top x_p \right)x_{pk}\right) + 
	\sum_{q=1}^{N_2} w^{B}_q \left(\sigma'\left(u^\top y_q \right)y_{qk}\right).
\end{equation}
Clearly, the second term in \eqref{E:Inner-DNTK} is always nonnegative. 
By Weyl min-max principle, every lower bound of the smallest eigenvalue for \(\Sigma^{\infty}_{\net}(X)\) 
is also a lower bound for that of \(\Theta^{\infty}_{\net}(X)\): 
\begin{equation}\label{E:lambda-The_Sig} % lambda(Theta), lambda(Sigma)
	\lambda_{\min}(\Theta^{\infty}_{\net}(X)) = 
	\inf_{w\in \mathbb{S}^{N-1}} w^\top \Theta^{\infty}_{\net}(X) w \geq
	\inf_{w\in \mathbb{S}^{N-1}} w^\top \Sigma^{\infty}_{\net}(X) w 
	= \lambda_{\min}(\Sigma^{\infty}_{\net}(X)).
\end{equation}
Since the Gaussian vector \(u\sim \NN(0,I_d)\) is the product of two independent
random variables \( u = |u|\cdot \ol{u} \), where the squared Euclidean norm \(|u|^2 \sim \chi^2_d \) follows the 
\(\chi^2_d\)-distribution and the direction \( \ol{u} = u/|u| \sim \mathrm{U}(\mathbb{S}^{d-1})\) 
follows the uniformly distribution over $\mathbb{S}^{d-1}$. 
By substituting \(\Delta\sigma(u^\top x_p) = \sigma^{(2)}(u^\top x_p)|u|^2\) and \(\sigma=\sigma_s\) 
into \eqref{E:Inner-Cov}, we arrive at
\begin{align}\label{E:Pro-trans} 	w^\top \Sigma^{\infty}_{\net}(X) w 
	& = \E_{u\sim \NN(0,I_d)} \left[ \left| \sum_{p=1}^{N_1} w^{I}_p \sigma_s^{(2)}(u^\top x_p)|u|^2 + 
	\sum_{q=1}^{N_2} w^{B}_q \sigma_s(u^\top y_q) \right|^2 \right]\nonumber \\
	& = \E_{u\sim \NN(0,I_d)} \left[ \left| \sum_{p=1}^{N_1} w^{I}_p 
	\sigma_s^{(2)}\Big(\ol{u}^\top x_p\Big)|u|^s + 
	\sum_{q=1}^{N_2} w^{B}_q \sigma_s\Big(\ol{u}^\top y_q\Big)|u|^s  \right|^2\right] \nonumber\\
	& = \E_{u\sim \mathrm{U}(\mathbb{S}^{d-1})} \left[ \left| \sum_{p=1}^{N_1} w^{I}_p 
	\sigma_s^{(2)}(u^\top x_p) + \sum_{q=1}^{N_2} w^{B}_q \sigma_s(u^\top y_q)  \right|^2\right]
	\cdot \E_{|u|^2\sim \chi^2_d} \left[|u|^{2s}\right] \nonumber\\
	& = \E_{|u|^2\sim \chi^2_d} \left[|u|^{2s}\right] \E_{u\sim \mathrm{U}(\mathbb{S}^{d-1})}\left[|g(u;w)|^2\right]. 
\end{align}	
The function \(g(u;w):\mathbb{S}^{d-1} \to \R \) is given by
\begin{equation}\label{E:g(u;w)}
\begin{aligned}
	g(u;w) & = \sum_{p=1}^{N_1} w^{I}_p 
	\sigma_s^{(2)}(u^\top x_p) + \sum_{q=1}^{N_2} w^{B}_q \sigma_s(u^\top y_q) \\
	& = \sum_{p=1}^{N_1} w^{I}_p (s)_2|x_p|^{s-2}\sigma_{s-2}(u^\top \ol{x}_p) 
	+ \sum_{q=1}^{N_2} w^{B}_q |y_q|^{s}\sigma_s(u^\top \ol{y}_q),	
\end{aligned}
\end{equation}
where \((s)_k = s(s-1)\cdots(s-k+1)\), for \(k\geq 0\), denotes the falling factorial 
and \(\ol{x}:= x/|x|\) denotes the normalized vector for any nonzero \(x\in\R^d\).
Thus, the bounds of the smallest eigenvalue \(\lambda_{\min}(\Sigma^{\infty}_{\net}(X))\) are related to the average norm of the function \(g(u;w)\) on the unit sphere for all \(w\in \mathbb{S}^{d-1}\), which can be analyzed using tools from spherical harmonics.

\begin{remark}
The reduction of the problem from the lower bound of the eigenvalue to that of function norms on  $\mathbb{S}^{d-1}$ requires several properties of the differential operator and the activation $\sigma$. First, the operator \(\AA\) is symmetric, including identity, Laplacian, Helmholtz and biharmonic operators etc, or the outer derivative if the boundary $\partial\Omega$ is a sphere centered at the origin, so that the expression of \(\AA\sigma(u^\top x)\) is tractable, like \(\Delta\sigma(u^\top x_p) = \sigma^{(2)}(u^\top x_p)|u|^2\). Then we can exploit the polar decomposition of a Gaussian vector. 
Second, the absence of the bias term and the homogeneity of the activation \(\sigma\) allow  extracting a \(\chi^2_d\) random variable, i.e., the power of the norm of the standard Gaussian vector, from the inner product \(w^\top \Sigma^{\infty}_{\net}(X) w\) as in \eqref{E:Pro-trans}.
Thus the analysis technique does not directly apply to other activation functions, non-symmetric differential operators or two-layer NNs with bias term. Also it remains very challenging to analyze the DNTK for DNNs since the recursive formulas of the covariance function and DNTK are complicated unlike the NTK case \cite{JinWu:2026DNTK}.
This also reflects the technical challenge for the spectral analysis of the DNTK.
\end{remark}

Now, we can state the main results of the work. Given the sets of interior and boundary points \(X=X_{I} \sqcup X_{B}\), we define the following two constants:
\begin{equation}\label{E:pts-LUbnd} % sampling points, lower and upper bound
	b_s(X):= \min_{x_p\in X_I, ~ y_q\in X_B}\left\{ (s)_2|x_p|^{s-2}, |y_q|^{s} \right\}\quad\mbox{and} \quad 
	B_s(X):= \max_{x_p\in X_I, ~ y_q\in X_B}\left\{ (s)_2|x_p|^{s-2}, |y_q|^{s} \right\}.
\end{equation}	

\begin{theorem}[Bounds on the smallest eigenvalue] \label{T:bound-min}
	Let \(\Theta^\infty_{\net}(X)\) be the infinite DNTK matrix for the 
	Poisson equation \eqref{E:Poisson} with the sampling points \(X\). 
\begin{itemize}
\item[{\rm(i)}] If $X$ is \(\delta\)-separated, then for \(d\geq 3\) and \( 3\leq s\leq n_1-5\), \(\lambda_{\min}(\Theta^\infty_{\net}(X))\) is lower bounded: 
		\begin{equation*}% \label{E:min-lower}
			\lambda_{\min}(\Theta^\infty_{\net}(X))
			\geq \frac{2^{s+d+2}}{e\pi^2} \frac{\Gamma(s-1)^2\Gamma(s+\frac{d}{2})\Gamma(\frac{d}{2})}{\Gamma(d-1)}
					\frac{b_{s}^2(X)}{(n_1+s+d)^{s+2}n_1^{s}},
		\end{equation*}
with the constant \(n_1\) given by 
$n_1 = 1 + \frac{4}{\delta^2} \left[\frac{2N\Gamma\left(\frac{d-1}{2}\right)}{\sqrt{\pi}}\right]^{\frac{2}{d-2}}$.
\item[{\rm(ii)}] Let \(\epsilon_x := \min\{|x_i-x_j|:i\neq j\in[N_1]\}\) and \(\epsilon_y := \min\{|y_i-y_j|:i\neq j\in[N_2]\}\). Then, for \(d\geq 2, s\geq 3\),  \(\lambda_{\min}(\Theta^\infty_{\net}(X))\) is upper bounded: 
		\begin{equation*} % \label{E:min-upper}
        \begin{aligned}
			\lambda_{\min}(\Theta^\infty_{\net}(X)) & \leq \min \left\{
				\frac{2^{s}s}{\sqrt{\pi}}\frac{\Gamma(s+\frac{d}{2})\Gamma(s-\frac{3}{2})}{\Gamma(s-2+\frac{d}{2})} b_s^2(X), \right. \\
                & \hspace{1cm} \left. 
				2^{s+2}(s-2)^2\frac{\Gamma(s+\frac{d}{2})}{\Gamma(\frac{d}{2})} B^2_{s}(X)\epsilon_x^2, 
				~ 2^{s+1}s^2\frac{\Gamma(s+\frac{d}{2})}{\Gamma(\frac{d}{2})} B^2_{s}(X)\epsilon_y^2
			\right\}.
        \end{aligned}
		\end{equation*}
\end{itemize}
\end{theorem}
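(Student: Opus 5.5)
For part (i) I would start from the reduction \eqref{E:lambda-The_Sig}--\eqref{E:Pro-trans}. It gives
\[
\lambda_{\min}(\Theta^\infty_{\net}(X))\ \ge\ \E_{|u|^2\sim\chi^2_d}\big[|u|^{2s}\big]\,\inf_{w\in\mathbb{S}^{N-1}}\E_{u\sim \mathrm{U}(\mathbb{S}^{d-1})}\big[|g(u;w)|^2\big],
\]
with $\E|u|^{2s}=2^s\Gamma(s+\frac d2)/\Gamma(\frac d2)$. So the task becomes a uniform lower bound on $\|g(\cdot;w)\|_{L^2(\mathbb{S}^{d-1})}$.

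I would expand the two ridge functions $\sigma_{s-2}(u^\top\ol{x})$ and $\sigma_s(u^\top\ol{y})$ in spherical harmonics. By the Funk--Hecke formula, $\sigma_m(u^\top\ol{x})=\sum_k \mu_k^{(m)} \frac{N(d,k)}{|\mathbb{S}^{d-1}|}P_k(u^\top\ol x)$, where $P_k$ is the Gegenbauer/Legendre polynomial and $\mu_k^{(m)}$ has an explicit Gamma-function expression. For the relevant parity, $|\mu_k^{(m)}|\asymp k^{-(m+(d-1)/2)}\cdot$const, with explicit constants.

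The coefficients of $g$ are $a_p=w_p^I(s)_2|x_p|^{s-2}$ and $c_q=w_q^B|y_q|^s$. Projecting $g$ onto the degree-$k$ harmonic space $\mathcal H_k$ gives
\[
\|g\|^2\ge \|\Pi_k g\|^2=\frac{N(d,k)}{|\mathbb{S}^{d-1}|}\,v_k^\top M_k v_k .
\]
Here $v_k=(\mu_k^{(s-2)}a,\ \mu_k^{(s)}c)$ and $M_k=[P_k(\ol x_i^\top\ol x_j)]_{i,j\in[N]}$ is the Gram matrix of the normalized points. This is the point where the two different operators are merged: after scaling, both interior and boundary points contribute through the same zonal kernel, only with different scalar weights.

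Next I would use $\delta$-separateness. For $|t|\le\sqrt{1-\delta^2}$ there is the standard Gegenbauer decay $|P_k(t)|\le \frac{\Gamma(\frac{d-1}{2})}{\sqrt\pi}\,\big(\tfrac{4}{k\delta^2}\big)^{(d-2)/2}$ (up to an explicit constant). The choice of $n_1$ in the statement is exactly the point at which this bound drops below $1/(2N)$. Gershgorin then gives $\lambda_{\min}(M_k)\ge 1/2$ for $k\ge n_1$, which is where $d\ge3$ is needed.

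To avoid losing mass at a single $k$, I would average over one or two consecutive degrees $k\in\{n,n+1\}$ with $n_1\le n\le n_1+1$. This ensures both the $(s-2)$- and $s$-coefficients are nonzero; the odd/even vanishing pattern of $\mu_k$ is controlled by the parity of $s$. I would then bound $\min(|\mu_n^{(s-2)}|,|\mu_n^{(s)}|)^2\,N(d,n)\gtrsim \Gamma(s-1)^2\ldots/\big((n+s+d)^{s+2}n^s\big)$ using Gamma-ratio inequalities. Combining everything with $|a|^2+|c|^2\ge b_s^2(X)$ yields (i).

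The main obstacle is making the Funk--Hecke coefficient bounds explicit and uniform in both orders $s-2$ and $s$, so that they produce the exact constant $\frac{2^{s+d+2}}{e\pi^2}\cdots$ together with the hypothesis $s\le n_1-5$. I would first try the closed form of $\mu_k^{(m)}$ as a ratio of Gamma functions, and then apply Wendel/Gautschi-type inequalities.

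For part (ii) I would use three test vectors in $w^\top\Theta^\infty w$ to realize the three terms of the minimum.

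First, $w=e_\ell$ for a single point gives a diagonal entry. Taking the point attaining $b_s$ yields $\Sigma$ plus the $g_k$ terms. Here $\Delta(\sigma'(u^\top x)x_k)=\sigma'''(u^\top x)|u|^2x_k+2\sigma''(u^\top x)u_k$. I would compute everything via the polar decomposition and one-dimensional moments $\E(u_1)_+^{2m}=\frac{2^{m-1}\Gamma(m+\frac12)}{\sqrt\pi}$, giving the first bound.

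Second, $w=(e_i-e_j)/\sqrt2$ for the two closest interior points. The quadratic form is bounded by a Lipschitz estimate: $|\sigma^{(2)}_s(u^\top x_i)-\sigma^{(2)}_s(u^\top x_j)|\le (s)_2(s-2)\,|u|^{s-2}\ldots|x_i-x_j|$ by the mean value theorem, and similarly for the $g_k$ terms. I would bound the Gaussian moments of $|u|^{2s}$ and the point norms by $B_s(X)$, giving the $\epsilon_x^2$ term.

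Third, the same argument for the two closest boundary points gives the $\epsilon_y^2$ term.

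Since $\lambda_{\min}\le w^\top\Theta^\infty w$ for each unit $w$, taking the minimum over these three choices completes (ii).
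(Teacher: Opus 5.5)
Your part (i) and the diagonal ($b_s$) term of part (ii) follow the paper's route. That route is: Funk--Hecke projection onto a single degree $k\not\equiv s \pmod 2$; Gershgorin on $[P_{k,d}(\overline x_i\cdot\overline x_\ell)]$ via \eqref{E:Ine-Legendre}, whose threshold is $K_1=n_1-1$; Gamma-ratio bounds; and polar decomposition for the diagonal entries. One slip there: by the closed form, $|\lambda_k(\sigma_m)|\asymp k^{-(m+d/2)}$, not $k^{-(m+(d-1)/2)}$. Only the former, multiplied by $N_{k,d}\asymp k^{d-2}$, matches your target rate $(n+s+d)^{-(s+2)}n^{-s}$.

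\textbf{The gap: the $\epsilon_x$ and $\epsilon_y$ terms of (ii).} With $w=(e_i-e_j)/\sqrt2$ and the mean value theorem in the unnormalized points, the factor hidden in your ``$\ldots$'' is $R^{s-3}$, where $R=\max(|x_i|,|x_j|)$. Explicitly, $|\sigma_s^{(2)}(u^\top x_i)-\sigma_s^{(2)}(u^\top x_j)|\le (s)_3R^{s-3}|u|^{s-2}|x_i-x_j|$. But $B_s(X)$ controls $(s)_2|x_p|^{s-2}$, one power more, so the best you get is $(s)_3R^{s-3}\le (s-2)B_s(X)/R$. Hence ``bound the point norms by $B_s(X)$'' loses a factor $R^{-2}$ and closes only when the points have norm at least $1$. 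Boundary pairs have the same defect: $s\max(|y_i|,|y_j|)^{s-1}$ against $|y_q|^s$.

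This cannot be patched, because with Euclidean distances the inequality is false. By homogeneity, $\Theta^\infty_f(X)=T\widetilde K T$ with $T=\mathrm{diag}\big((s)_2|x_p|^{s-2},|y_q|^s\big)$ and $\widetilde K$ depending only on the directions $\overline x_p,\overline y_q$. Take $x_p=r\overline x_p$ and $y_q=\rho\,\overline y_q$ with $\rho^s=(s)_2r^{s-2}$; $\Omega$ can be the ball of radius $\rho$ once $r^2<s(s-1)$. Then $T=b_s(X)I=B_s(X)I$, so $\lambda_{\min}(\Theta^\infty_f(X))/B_s^2(X)=\lambda_{\min}(\widetilde K)$ is independent of $r$. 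Meanwhile $\epsilon_x^2=r^2\min_{i\neq j}|\overline x_i-\overline x_j|^2\to 0$ as $r\to 0$. So the claimed bound would force $\lambda_{\min}(\widetilde K)=0$, contradicting part (i) for $d\ge 3$ and $\delta$-separated directions.

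\textbf{What the paper's proof does instead.} Despite the wording of the statement, it measures closeness of directions, $\epsilon_x=\min_{i\neq j}|\overline x_i-\overline x_j|$ and likewise for $\epsilon_y$. It also cancels the magnitudes instead of using equal weights: $w=(\alpha,\beta,0,\dots,0)$ with $\alpha^2+\beta^2=1$ and $\alpha(s)_2|x_1|^{s-2}=-\beta(s)_2|x_2|^{s-2}$. Then $g(u;w)=\alpha(s)_2|x_1|^{s-2}\big[\sigma_{s-2}(u^\top\overline x_1)-\sigma_{s-2}(u^\top\overline x_2)\big]$, with prefactor at most $B_s(X)$. The Lipschitz bound for $\sigma_{s-2}$ on $[-1,1]$, together with $\mathbb{E}_{u\sim\mathrm{U}(\mathbb{S}^{d-1})}|u^\top(\overline x_1-\overline x_2)|^2=\epsilon_x^2/d$, handles the $\Sigma$-part. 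In the $g_k$-part, $x_{ik}\sigma_s^{(3)}(\overline u^\top x_i)=(s-2)(s)_2|x_i|^{s-2}\,\overline x_{ik}\,\sigma_{s-3}(\overline u^\top\overline x_i)$. So the magnitudes cancel again, and only $\overline x_{1k}-\overline x_{2k}$ and $\sigma_{s-3}(\overline u^\top\overline x_1)-\sigma_{s-3}(\overline u^\top\overline x_2)$ remain. Boundary pairs are handled the same way with $\alpha|y_1|^s=-\beta|y_2|^s$.

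Equal weights fail even with directional distances. Two parallel points of different lengths give proportional rows, hence a singular matrix, yet $w^\top\Theta^\infty_f(X)w>0$ for $w=(e_1-e_2)/\sqrt2$.

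\textbf{A further problem at $s=3$.} Your ``similarly for the $g_k$ terms'' needs $\sigma_s^{(3)}=(s)_3\sigma_{s-3}$ to be Lipschitz, which fails at $s=3$. There $\sigma_3^{(3)}=6\,\mathbf{1}_{(0,\infty)}$, and the pair's $g_k$-part contains $6\alpha|x_1||u|^2\big(\overline x_1\mathbf{1}_{\{u^\top\overline x_1>0\}}-\overline x_2\mathbf{1}_{\{u^\top\overline x_2>0\}}\big)$. The two indicators differ on a wedge of measure $\asymp\epsilon_x$, so the mean square of this term is of order $\epsilon_x$, not $\epsilon_x^2$. This route therefore yields the $\epsilon_x^2$ term only for $s\ge 4$. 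The paper's step has the same defect: it asserts a Lipschitz bound for $\sigma_0$, which does not hold.
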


\begin{remark}\label{R:bound-min}
	The constraint \(d\geq 3\) for the lower bound stems from the inequality \eqref{E:Ine-Legendre} in Lemma \ref{L:lambda(D_n^TD_n)}, since the Legendre polynomial \(P_{n,2}(t)\) of degree \(n\) in dimension two is oscillatory for \(|t|<1\) as \(n\) grows. 
    There are two main factors that affect the lower bound for the smallest
	eigenvalue, the minimum length of the samples and the distance of their directions. Suppose \(b_s(X)=1\), then the lower bound has the order \(\delta^{2s+2}\) for \(s\) not too large (if we ignore the impact of \(N\)), which shows a decay in the order of the RePU activation \(s\) (RePU order in short). 
	Although the RePU order $s$ will decrease when acted by the differential operators for interior points, the RePU order for boundary points stays unchanged. The order \(\delta^{2s+2}\)  coincides with that in regression task 
	when \(s=0\) \cite[Theorem 1]{Karhadkar:2024Bounds}. 

In Theorem \ref{T:bound-min}, there are three terms in the upper bounds of the smallest eigenvalue with the main factors given by 
	\(b_s(X), \epsilon_x\) and \(\epsilon_y\) respectively. They arise from the following cases 
	that make the DNTK matrix singular: (i) a zero point in samplings points which induces a vanishing row due to the absence of a bias term; (ii) close points in interior or boundary points which induce two nearly identical rows. Note that although we need the \(\delta\)-separated property of the whole sampling points \(X\) to give a lower bound, the DNTK matrix is not necessarily  degenerate once the direction distance is zero, e.g., points with opposite direction, and this is why there is no \(\delta\) appearing in the upper bound; see also \cite{Karhadkar:2024Bounds} for related discussions. 
\end{remark}

\begin{theorem}[Bounds for the largest eigenvalue] \label{T:bound-max}
	Let \(\Theta^\infty_{\net}(X)\) be the infinite DNTK matrix for the 
	Poisson equation \eqref{E:Poisson} with sampling points \(X\).
	Then, for \(d\geq 2, s\geq 3\), \(\lambda_{\max}(\Theta^\infty_{\net}(X))\) satisfies 
	\begin{equation*}
		\frac{2^{s-1}}{\sqrt{\pi}} \Gamma\left(s+\frac{1}{2}\right) B_s^2(X) 
		\leq \lambda_{\max}(\Theta^\infty_{\net}(X)) \leq 
		\frac{2^{s-1}s^2}{\sqrt{\pi}}\frac{\Gamma(s+\frac{d}{2})\Gamma(s-\frac{5}{2})}{\Gamma(s-2+\frac{d}{2})} N B_s^2(X).
	\end{equation*}	
\end{theorem}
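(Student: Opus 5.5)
For the lower bound I would test $\Theta^\infty_{\net}(X)$ against a unit coordinate vector. For the upper bound I would combine $\lambda_{\max}\le \mathrm{tr}$, or $\lambda_{\max}\le N\max_{\ell}\Theta_{\ell\ell}$, with an explicit computation of the diagonal entries.

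\textbf{Lower bound.} Let $e_\ell$ be the coordinate vector at an index where $B_s(X)$ is attained. Then $\lambda_{\max}\ge e_\ell^\top\Theta^\infty_{\net}(X)e_\ell\ge e_\ell^\top\Sigma^\infty_{\net}(X)e_\ell$, because the $g_k$-part in \eqref{E:Inner-DNTK} is nonnegative. By \eqref{E:Pro-trans}, this equals $\E[|u|^{2s}]\,\E_{u\sim\mathrm{U}(\mathbb{S}^{d-1})}[|g(u;e_\ell)|^2]$.

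For a boundary point $y_q$ we get $g=|y_q|^s\sigma_s(u^\top\ol y_q)$. Recombining the polar decomposition, the quantity is simply $|y_q|^{2s}\,\E_{u\sim\NN(0,I_d)}[(u^\top\ol y_q)_+^{2s}]$. Here $u^\top\ol y_q\sim\NN(0,1)$, so the expectation is $\tfrac12\E|Z|^{2s}=\tfrac12\cdot 2^s\Gamma(s+\tfrac12)/\sqrt\pi$.

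For an interior point $x_p$ the analogous computation gives $(s)_2^2|x_p|^{2s-4}\,\E[|u|^4 (u^\top\ol x_p)_+^{2s-4}]$. Since $|u|^4\ge (u^\top\ol x_p)^4$, this is at least $(s)_2^2|x_p|^{2(s-2)}\cdot\tfrac12\E|Z|^{2s}$.

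In both cases the value is at least $\tfrac{2^{s-1}}{\sqrt\pi}\Gamma(s+\tfrac12)$ times the squared relevant factor in \eqref{E:pts-LUbnd}. Maximizing over $\ell$ gives the stated lower bound. This step is essentially routine.

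\textbf{Upper bound.} Since $\Theta^\infty_{\net}(X)$ is positive semidefinite, I would use $\lambda_{\max}\le\mathrm{tr}\,\Theta^\infty_{\net}(X)\le N\max_\ell\Theta_{\ell\ell}$. It then suffices to bound each diagonal entry by $\frac{2^{s-1}s^2}{\sqrt\pi}\frac{\Gamma(s+\frac d2)\Gamma(s-\frac52)}{\Gamma(s-2+\frac d2)}B_s^2(X)$. Each diagonal entry has two parts: the $\Sigma$ part and the $\sum_k\E|g_k|^2$ part.

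For the $\Sigma$ part I would use the polar factorization. The radial factor is $\E[|u|^{2s}]=2^s\Gamma(s+\frac d2)/\Gamma(\frac d2)$. The spherical factor involves $\E_{\mathbb{S}^{d-1}}[(u_1)_+^{2m}]=\frac{\Gamma(m+\frac12)\Gamma(\frac d2)}{2\sqrt\pi\,\Gamma(m+\frac d2)}$ with $m=s$ or $m=s-2$. Together these give expressions of the form $\frac{2^{s-1}}{\sqrt\pi}\frac{\Gamma(s+\frac d2)\Gamma(m+\frac12)}{\Gamma(m+\frac d2)}$ times the squared factor.

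For the gradient part I would use $\sigma'=s\sigma_{s-1}$. On a boundary point, $\sum_k|\sigma'(u^\top y)y_k|^2=s^2|y|^{2}(u^\top y)_+^{2s-2}$. On an interior point, $\Delta(\sigma'(u^\top x)x_k)$ expands into $\sigma'''(u^\top x)|u|^2x_k+2\sigma''(u^\top x)u_k$. I would expand the square of this, sum over $k$, and again split into radial and spherical parts. The homogeneity works out to total degree $2s$ in $|u|$, so the same $\chi^2$ moment appears.

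\textbf{Main obstacle.} The hardest step is collecting all of these Gamma-ratio terms (the $\Sigma$ part, the gradient part, the cross terms, and the interior and boundary cases) under the single constant $s^2\Gamma(s-\frac52)/\Gamma(s-2+\frac d2)$. The $\Gamma(s-\frac52)$, i.e.\ $m=s-3$, comes from the $\sigma'''$ term. I would show that each term is dominated using monotonicity: $\Gamma(m+\frac12)/\Gamma(m+\frac d2)$ is decreasing in $m$ for $d\ge2$, and the powers of $|x|$ and $|y|$ are absorbed into $B_s^2(X)$. I expect some crude inequalities to be needed here, such as $(a+b)^2\le2a^2+2b^2$ and $(s)_k\le s^k$. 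Checking that the numerical prefactors fit is where the argument could fail, and there I would adjust the constants.
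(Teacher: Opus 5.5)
Your lower bound is complete and correct, and it takes a neat shortcut. Bounding $|u|^4\ge(u^\top\ol{x}_p)^4$ reduces the interior case to the same one-dimensional moment $\E[Z_+^{2s}]=\frac{2^{s-1}}{\sqrt\pi}\Gamma(s+\frac12)$ as the boundary case. The paper instead computes the interior entry exactly on the sphere and then needs the comparison $\frac{\Gamma(s+\frac d2)\Gamma(s-\frac32)}{\Gamma(s-2+\frac d2)}\ge\Gamma(s+\frac12)$.

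Your upper-bound strategy is also the paper's: its Cauchy--Schwarz step $|g(u;w)|^2\le\sum_\ell a_\ell(u)^2$ amounts to $\lambda_{\max}\le\mathrm{tr}$. The problem is at the step you left open. Your claim that the gradient part has total degree $2s$ in $u$ is false. Both $x_k\sigma'''(u^\top x)|u|^2+2\sigma''(u^\top x)u_k$ and $\sigma'(u^\top y)y_k$ are homogeneous of degree $s-1$, as your own formula $s^2|y|^2(u^\top y)_+^{2s-2}$ shows. So the radial factor is $m_{d,s-1}=2^{s-1}\Gamma(s-1+\frac d2)/\Gamma(\frac d2)$, not $m_{d,s}$. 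This matters: using $m_{d,s}$ inflates the gradient part by $2(s-1+\frac d2)$. At an interior point attaining $B_s(X)$, the $\sigma'''$ term alone would then exceed the target whenever $(s-2)^2(s-3+\frac d2)>s^2$, e.g.\ for $s=5$, $d=2$. Your check would fail, and ``adjusting constants'' is not available for a fixed statement.

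With the correct moment, no adjustment is needed. Put $c=\frac d2$. In the interior case, the nonnegative cross term $4|u|^2(u^\top x)\sigma'''\sigma''$ combines with $4|u|^2\sigma''^2$ into $4(s-1)(s)_2^2|u|^2(u^\top x)_+^{2s-4}$. The exact diagonal entries at an interior point $x$ and a boundary point $y$ are then
\begin{align*}
\Theta_{yy}&=\tfrac{2^{s-1}}{\sqrt\pi}\,|y|^{2s}\,\Gamma(s-\tfrac52)\,(s-\tfrac32)(s-\tfrac52)\big(\tfrac{s^2}{2}+s-\tfrac12\big),\\
\Theta_{xx}&=\tfrac{2^{s-1}}{\sqrt\pi}\,(s)_2^2|x|^{2(s-2)}\,\Gamma(s-\tfrac52)\,(s-2+c)\\
&\qquad\times\Big[(s-1+c)(s-\tfrac52)+\tfrac12(s-2)^2(s-3+c)+2(s-1)(s-\tfrac52)\Big].
\end{align*}
The per-entry target is $\frac{2^{s-1}s^2}{\sqrt\pi}\Gamma(s-\frac52)(s-1+c)(s-2+c)B_s^2(X)$.

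\begin{itemize}
\item Interior entry: the bracket is at most $(s-1+c)(\frac{s^2}{2}+s-\frac{11}{2})\le s^2(s-1+c)$.
\item Boundary entry: $(s-\frac32)(s-\frac52)\le s(s-1)\le(s-1+c)(s-2+c)$, using $d\ge2$, and $\frac{s^2}{2}+s-\frac12\le s^2$.
\end{itemize}
So every $\Theta_{\ell\ell}$ lies below the target, and $\lambda_{\max}\le\mathrm{tr}\le N\max_\ell\Theta_{\ell\ell}$ gives the theorem. The paper reaches the same constant using the cruder $(a+b)^2\le 2a^2+2b^2$ instead of the exact cross term, ending with the factor $s^2-\frac52\le s^2$.
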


Note that the lower bound differ by a factor \(N\) from the upper bound. The former corresponds to the case where all the sampling points are uniformly distributed, while the latter reflects the pathological case that all the points very close to each other so that the largest eigenvalue increases linearly with the number $N$ of sampling points.

\section{The proofs of Theorems \ref{T:bound-min} and \ref{T:bound-max}}
\label{S-A:Bound-eigen}

\subsection{Preliminaries on spherical harmonics}\label{S-A:Sph-Har}
The proofs rely heavily on properties of spherical harmonics. Below we provide a brief overview, following \cite{Atkinson:2012}. Let \(\mathbb{S}^{d-1} = \{ x\in\R^d: |x|=1 \}\) be the unit sphere in the Euclidean space $\mathbb{R}^d$, with $d\geq 2$, 
\(\d S^{d-1}\) be the restriction of the \(d\)-dimensional Lebesgue measure on the sphere
\(\mathbb{S}^{d-1}\), and \(\omega_{d-1}\)  be the area of \(\mathbb{S}^{d-1}\):
\begin{equation}\label{E:area_sphere}
	\omega_{d-1} = \int_{\mathbb{S}^{d-1}}\d S^{d-1} = \frac{2\pi^{\frac{d}{2}}}{\Gamma(\frac{d}{2})}.
\end{equation}
The space \(L^2(\mathbb{S}^{d-1}):=L^2(\mathbb{S}^{d-1};\d S^{d-1})\) 
consists of real-valued square integrable functions on \(\mathbb{S}^{d-1}\), and 
is a Hilbert space, when equipped with the inner product 
\(\int_{\mathbb{S}^{d-1}} fg  ~ \d S^{d-1}\) for \(f,g\in L^2(\mathbb{S}^{d-1})\). For any \(n\geq 0\), \(\mathbb{Y}^d_n \subset L^2(\mathbb{S}^{d-1})\) denotes the space of 
spherical harmonics of degree $n$ on \(\mathbb{S}^{d-1}\), whose dimension is
\(N_{n,d}\), with  \(N_{0,d} = 1\) for all \(d\geq 2\) and
% \(N_{0,d} = N_{1,1} = 1\) for \( d\geq 1\), \(N_{n,1} = 0 \) for \(n\geq 2\), and
\begin{equation}\label{E:N_{n,d}}
	N_{n,d} = \frac{(2n+d-2)(n+d-3)!}{(d-2)!n!} = \binom{n+d-1}{d-1} - \binom{n+d-3}{d-1}, 
	\quad \text{ for } n\geq 1, d\geq 2.
\end{equation}
Let \(\{ Y_{n,j}\}_{j=1}^{N_{n,d} }\) be an orthonormal basis of \(\mathbb{Y}^d_n\). 
Then the set of disjoint union \(\bigsqcup_{n=0}^\infty\{ Y_{n,j}:j\in[N_{n,d}]\} \) forms a 
complete orthonormal basis of \(L^2(\mathbb{S}^{d-1})\). 
Let $P_{n,d}(t)$, $t\in[-1,1]$ be the Legendre polynomial of degree \(n\) in dimension \(d\) with \(P_{n,d}(1)=1\).
Then, we have  
\(P_{n,d}(-t) = (-1)^n P_{n,d}(t)\) for all \(n\geq 0\) and \(|P_{n,d}(t)|\leq 1\) for all \(t\in[-1,1]\).
Moreover, we have the following useful \emph{Rodrigues representation formula}  
\cite[Theorem 2.23]{Atkinson:2012} (for $d\geq 2$): 
\begin{equation}\label{E:Rodrigues-formula}
	P_{n,d}(t) = (-1)^n R_{n,d} (1-t^2)^{\frac{3-d}{2}} \frac{\d^n}{\d t^n} (1-t^2)^{n+\frac{d-3}{2}}, \quad 
 \text{with } R_{n,d} = \frac{\Gamma(\frac{d-1}{2})}{2^n\Gamma(n+\frac{d-1}{2})}.
\end{equation}
The orthonormal basis \(\{ Y_{n,j}\}_{j=1}^{N_{n,d} }\) satisfy the \emph{addition formula} \cite[Theorem 2.9]{Atkinson:2012}:
\begin{equation}\label{E:addition-formula}
	\sum_{j=1}^{N_{n,d}} Y_{n,j}(x)Y_{n,j}( y)
	= \frac{N_{n,d}}{\omega_{d-1}} P_{n,d}(x\cdot y), \quad \forall n\geq 0,  x,y\in\mathbb{S}^{d-1}.
\end{equation}
For any function \(f(t)\in L^1_{\frac{d-3}{2}}(-1,1)\), namely 
\( \int_{-1}^{1} |f(t)|(1-t^2)^{\frac{d-3}{2}}~\d t < \infty \), and any spheric harmonics \(Y_n\in \mathbb{Y}^d_n\), we have the following \emph{Funk-Hecke formula} \cite[Theorem 2.22]{Atkinson:2012}: 
\begin{equation}\label{E:Funk-Hecke}
\int_{\mathbb{S}^{d-1}} f(x\cdot y)Y_n(y) ~ {\rm d}S^{d-1}(y) = \lambda_n(f) Y_n(x), 
	\quad \forall n\geq 0, x\in\mathbb{S}^{d-1},
\end{equation}
with the coefficient \(\lambda_n(f)\) given by
\begin{equation}\label{E:F-H_coeff}
	\lambda_n(f) = \omega_{d-2} \int_{-1}^{1} f(t)P_{n,d}(t)(1-t^2)^{\frac{d-3}{2}}~{\rm d}t.
\end{equation}

\subsection{Lower bound on the smallest eigenvalue}
Let \(m_{d,s}\) be the \(s\)-th moment of the $\chi^2_d$ random variable:
\begin{equation}\label{E:chi-moment}
	m_{d,s} := \E_{|u|^2\sim \chi^2_d} \left[|u|^{2s}\right] = 2^s\frac{\Gamma(s+\frac{d}{2})}{\Gamma(\frac{d}{2})}.
\end{equation} 
From equations \eqref{E:lambda-The_Sig} and \eqref{E:Pro-trans}, with \(g(u;w)\) given in \eqref{E:g(u;w)}, we have
\begin{equation}\label{E:Lam(The)>|g|} % lambda(Theta)> inf|g(u;w)|
\begin{aligned}
	\lambda_{\min}(\Theta^{\infty}_{\net}(X)) 
	& \geq \inf_{w\in\mathbb{S}^{N-1}}\E_{|u|^2\sim \chi^2_d} \left[|u|^{2s}\right] 
	\E_{u\sim \mathrm{U}(\mathbb{S}^{d-1})}\left[|g(u;w)|^2\right] \\
	& = \frac{m_{d,s}}{\omega_{d-1}}\inf_{w\in\mathbb{S}^{N-1}}\|g(u;w)\|^2_{L^2(\mathbb{S}^{d-1})}.
\end{aligned}
\end{equation}
Using the orthonormal basis \(\left\{ Y_{n,j} \right\}_{j=1}^{N_{n,d}}\) of $Y_n^d$, \(g(u;w)\) can be decomposed as 
\begin{align}\label{eqn:guw-exp} g(u;w) = \sum_{n=0}^{\infty} \sum_{j=1}^{N_{n,d}} g_{n,j}(w) Y_{n,j}(u) \quad \text{and} \quad
\|g(u;w)\|_{L^2(\mathbb{S}^{d-1})}^2 = \sum_{n=0}^{\infty} \sum_{j=1}^{N_{n,d}} |g_{n,j}(w)|^2,
\end{align}
with $g_{n,j}(w) = \int_{\mathbb{S}^{d-1}} g(u;w) Y_{n,j}(u) ~ \d S^{d-1}(u)$.
Note that \(d\geq 2\) and \(\sigma_s(t)\leq 1\) for all \(t\in[-1,1]\). 
Using the Funk-Hecke formula \eqref{E:Funk-Hecke} on the function \(g(u;w)\) gives 
\begin{align*} 
	g_{n,j}(w) & = \int_{\mathbb{S}^{d-1}} \left( \sum_{p=1}^{N_1} w^{I}_p (s)_2|x_p|^{s-2}\sigma_{s-2}(u^\top \ol{x}_p) 
	+ \sum_{q=1}^{N_2} w^{B}_q |y_q|^{s}\sigma_s(u^\top \ol{y}_q) \right) Y_{n,j}(u) ~ {\rm d}S^{d-1}(u)\\
	& = \sum_{p=1}^{N_1} w^{I}_p \lambda_n\left((s)_2|x_p|^{s-2}\sigma_{s-2}\right) Y_{n,j}(\ol{x}_p)
	 + \sum_{q=1}^{N_2} w^{B}_q \lambda_n\left(|y_q|^{s}\sigma_s\right) Y_{n,j}(\ol{y}_q).
\end{align*}
Thus for each integer \(n\geq 0\), we have 
\begin{align*}
	\sum_{j=1}^{N_{n,d}} |g_{n,j}(w)|^2 = \sum_{j=1}^{N_{n,d}} 
	\left| \sum_{p=1}^{N_1}w^{I}_p \lambda_n\left((s)_2|x_p|^{s-2}\sigma_{s-2}\right) Y_{n,j}(\ol{x}_p) 
	+ \sum_{q=1}^{N_2} w^{B}_q \lambda_n\left(|y_q|^{s}\sigma_s\right) Y_{n,j}(\ol{y}_q) \right|^2. 
\end{align*}
For each \(n\), we define a diagonal matrix \(T_n\in \R^{N\times N}\):
\[T_n = \mathrm{diag}(T_{n1},\ldots,T_{nN}) := \mathrm{diag}\left(\lambda_n\left((s)_2|x_p|^{s-2}\sigma_{s-2}\right),
\lambda_n\left(|y_q|^{s}\sigma_s\right)\right), \quad \text{for } p\in[N_1], q\in[N_2], \] 
and the matrix \(D_n\in \R^{N_{n,d}\times N}\) such that
\[ D_n = \begin{bmatrix} Y_{n,j}(\ol{x}_\ell) \end{bmatrix}\quad \text{ for } j\in[N_{n,d}], ~\ell\in[N]. \]
Thus, for any \(w\in\mathbb{S}^{N-1}\), the inner quadratic sum in the identity \eqref{eqn:guw-exp} is given by
\begin{align*}\sum_{j=1}^{N_{n,d}} |g_{n,j}(w)|^2 &= \sum_{j=1}^{N_{n,d}} \left| \sum_{\ell=1}^{N} Y_{n,j}(\ol{x}_\ell) T_{n\ell} w_\ell \right|^2 
	= |D_n T_n w|^2 \\ &=  w^\top T_n(D_n^\top D_n)T_n w\geq \lambda_{\min}(T_n(D_n^\top D_n)T_n).
\end{align*} 

\begin{lemma}\label{L:diag-eigen} % eigenvalues of a matrix by diagonal scaling
	Let \(T = \mathrm{diga}(t_1,\ldots,t_n)\in\R^{n\times n}\) be diagonal 
	with \(t_{\min} := \min\{|t_1|, \ldots, |t_n|\}\) and \(t_{\max} = \max\{|t_1|, \ldots, |t_n|\}\).
	If \(X\in\R^{n\times n}\) is a positive semidefinite matrix, then there hold 
	\[ \lambda_{\min}(TXT)\geq t_{\min}^2\lambda_{\min}(X)\quad\mbox{and} \quad 
	\lambda_{\max}(TXT)\leq t_{\max}^2\lambda_{\max}(X). \]
\end{lemma}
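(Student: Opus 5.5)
The plan is to use the Rayleigh quotient characterization of the extreme eigenvalues of the symmetric matrix \(TXT\) together with the positive semidefiniteness of \(X\). Since \(T\) is diagonal, \(TXT\) is symmetric, and
\[
\lambda_{\min}(TXT)=\inf_{|v|=1} v^\top T X T v,\qquad \lambda_{\max}(TXT)=\sup_{|v|=1} v^\top T X T v .
\]
For any unit vector \(v\), I set \(z:=Tv\), so that \(v^\top TXTv=z^\top X z\). Applying the Rayleigh bounds for \(X\) gives
\[
\lambda_{\min}(X)|z|^2\le z^\top Xz\le \lambda_{\max}(X)|z|^2 .
\]

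The key step is comparing \(|z|^2\) with \(|v|^2=1\). Because \(T\) is diagonal,
\[
|z|^2=\sum_i t_i^2 v_i^2 ,
\]
so \(t_{\min}^2\le |z|^2\le t_{\max}^2\).

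Positive semidefiniteness is exactly what lets these inequalities be multiplied without flipping direction. Since \(\lambda_{\min}(X)\ge0\), the lower bound gives \(z^\top Xz\ge \lambda_{\min}(X)\,t_{\min}^2\). Since \(\lambda_{\max}(X)\ge0\), the upper bound gives \(z^\top Xz\le \lambda_{\max}(X)\,t_{\max}^2\). Taking the infimum and supremum over unit vectors \(v\) then yields both claimed inequalities.

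There is no real obstacle here. The only point needing care is the sign issue just described, and the case \(t_{\min}=0\), where the lower bound is trivially true because \(TXT\) is itself positive semidefinite.
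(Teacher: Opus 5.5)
Your proof is correct and follows the same route as the paper. Both substitute $y=Tx$ (your $z=Tv$), use the diagonal structure to bound $|y|^2$ between $t_{\min}^2|x|^2$ and $t_{\max}^2|x|^2$, and combine this with the Rayleigh bounds for $X$ via the min-max principle. You are slightly more careful than the paper in stating that $\lambda_{\min}(X)\ge 0$ and $\lambda_{\max}(X)\ge 0$ are what allow the inequalities to be chained; the paper uses this only implicitly.
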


\begin{proof}
For any \(x\in\R^n\), let \(y:=Tx\). Then \(y_i = t_i x_i\) for \(i\in[n]\) and 
$|y|_2^2 = \sum_{i=1}^{n} t_i^2 x_i^2 \geq t_{\min}^{2}|x|_2^2.$ 
Thus, we have 
\[ x^\top (TXT)x = y^\top X y \geq \lambda_{\min}(X)|y|_2^2 \geq \lambda_{\min}(X)t_{\min}^{2}|x|_2^2.  \]
By Weyl min-max principle, we have 
	\[ \lambda _{\min}(TXT) = \min_{x\neq 0}\frac{x^\top (TXT)x}{x^\top x}
	\geq \min_{x\neq 0}\frac{\lambda_{\min}(X)t_{\min}^{2}x^\top x}{x^\top x} 
	= t_{\min}^{2}\lambda_{\min}(X). \]
	Similarly, for any \(x\in\R^n\), we have
	$|y|_2^2 = \sum_{i=1}^{n} t_i^2 x_i^2 \leq t_{\max}^{2}|x|_2^2.$ Thus, we get 
	\[ x^\top (TXT)x = y^\top X y \leq \lambda_{\max}(X)|y|_2^2 \leq \lambda_{\max}(X)t_{\max}^{2}|x|_2^2,\] 
	which together with Weyl min-max principle implies
	\[ \lambda _{\max}(TXT) = \max_{x\neq 0}\frac{x^\top (TXT)x}{x^\top x}
	\leq  \max_{x\neq 0}\frac{\lambda_{\max}(X)t_{\max}^{2}x^\top x}{x^\top x} 
	= t_{\max}^{2}\lambda_{\max}(X). \]
This completes the proof of the lemma.
\end{proof}

By Lemma \ref{L:diag-eigen}, we have for any \(w\in \mathbb{S}^{N-1}\) and any integer \(n\geq 0\) 
\begin{align*}
&\sum_{j=1}^{N_{n,d}} |g_{n,j}(w)|^2 \geq \lambda_{\min}(T_n(D_n^\top D_n)T_n) \\
\geq&\min_{p\in[N_1],q\in[N_2]}\left\{ \lambda_n^2\left((s)_2|x_p|^{s-2}\sigma_{s-2}\right), 
\lambda_n^2\left(|y_q|^{s}\sigma_s\right) \right\} \lambda_{\min}(D_n^\top D_n).
\end{align*}
By the linearity of the expression \eqref{E:F-H_coeff}, we have 
\[ \lambda_n^2\left((s)_2|x_p|^{s-2}\sigma_{s-2}\right) = (s)_2^2|x_p|^{2(s-2)}\lambda_n^2(\sigma_{s-2})
\geq b_{s}^2(X)\lambda_n^2(\sigma_{s-2}),\] 
and similarly 
$\lambda_n^2\left(|y_q|^{s}\sigma_s\right)\geq b_{s}^2(X)\lambda_n^2(\sigma_{s})$.
Thus, we arrive at
\begin{equation}\label{E:sum>lambda*lambda}
	\sum_{j=1}^{N_{n,d}} |g_{n,j}(w)|^2 \geq b_{s}^2(X) 
	\min\left\{ \lambda_n^2(\sigma_{s-2}), \lambda_n^2(\sigma_s) \right\}
	\lambda_{\min}(D_n^\top D_n).
\end{equation}

Next, we bound the relevant factors in \eqref{E:sum>lambda*lambda} separately. First we  evaluate the coefficients \(\lambda_n(\sigma_s)\).
The next result has been already derived in \cite[Appendix D.2]{Bach:2017breaking}.
We also provide a proof for the convenience of readers. 

\begin{lemma}\label{L:lambda_n(sigma_s)}
	Let \(\lambda_{n,s} = \lambda_n(\sigma_s)\) for \(n,s\geq 0\). Then the following statements hold.
	\begin{itemize}
		\item for \(n\leq s\), \(\lambda_n(\sigma_s)\neq 0\);
		\item for \(n>s, n\equiv s \!\mod 2\), \(\lambda_n(\sigma_s) = 0\);
		\item for \(n>s, n\not\equiv s \!\mod 2\), 
\begin{equation}\label{E:lambda_n(sigma_s)}
			\lambda_n(\sigma_s) = (-1)^{\frac{n-s-1}{2}}\omega_{d-2}
			\frac{s!\Gamma(\frac{d-1}{2})}{2^{s+1}\sqrt{\pi}}
			\frac{\Gamma(\frac{n-s}{2})}{\Gamma(\frac{n+s+d}{2})}.
		\end{equation}
	\end{itemize}
\end{lemma}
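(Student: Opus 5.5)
Via \eqref{E:F-H_coeff}, the three claims reduce to an explicit one-dimensional integral:
\[
\lambda_n(\sigma_s)=\omega_{d-2}\int_0^1 t^s P_{n,d}(t)(1-t^2)^{\frac{d-3}{2}}\,\d t ,
\]
where the integral runs only over $[0,1]$ because $\sigma_s$ vanishes on $[-1,0]$. We insert the Rodrigues formula \eqref{E:Rodrigues-formula}; the weight $(1-t^2)^{\frac{d-3}{2}}$ cancels, and we obtain
\[
\lambda_n(\sigma_s)=(-1)^n\omega_{d-2}R_{n,d}\int_0^1 t^s\,\frac{\d^n}{\d t^n}(1-t^2)^{n+\frac{d-3}{2}}\,\d t .
\]
We then integrate by parts repeatedly, moving derivatives onto $t^s$. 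At $t=1$ every boundary term vanishes, since $\frac{\d^k}{\d t^k}(1-t^2)^{n+\frac{d-3}{2}}$ has the factor $(1-t^2)^{n-k+\frac{d-3}{2}}$ with positive exponent for $k\le n-1$ (using $d\ge 2$). At $t=0$ the terms $\big[(t^s)^{(j)}\,\frac{\d^{n-1-j}}{\d t^{n-1-j}}(1-t^2)^{n+\frac{d-3}{2}}\big]_{t=0}$ survive only for $j=s$.

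Case $n\le s$. We do not try to compute the value. Instead we use that $t^s$ restricted to $[0,1]$, viewed as the even or odd function $t\mapsto |t|^s$ or $t|t|^{s-1}$ on $[-1,1]$, has a component along $P_{n,d}$ of the matching parity. Concretely, after $n$ integrations by parts the integral becomes
\[
\frac{s!}{(s-n)!}\int_0^1 t^{s-n}(1-t^2)^{n+\frac{d-3}{2}}\,\d t
\]
plus boundary terms at $0$, which vanish when $n\le s$ (there $(t^s)^{(j)}(0)=0$ for $j<n\le s$). The remaining integral is a Beta integral with positive integrand, hence nonzero. This gives the first claim.

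Case $n>s$. We integrate by parts $s$ times and then once more, so that the derivative of $t^s$ becomes the constant $s!$. This leaves
\[
s!\int_0^1 \frac{\d^{n-s}}{\d t^{n-s}}(1-t^2)^{n+\frac{d-3}{2}}\,\d t
= -\,s!\,\frac{\d^{n-s-1}}{\d t^{n-s-1}}(1-t^2)^{n+\frac{d-3}{2}}\Big|_{t=0},
\]
up to a sign $(-1)^s$, using again the vanishing at $t=1$. Expanding $(1-t^2)^{\alpha}=\sum_k\binom{\alpha}{k}(-1)^kt^{2k}$ with $\alpha=n+\frac{d-3}{2}$, the value at $0$ of the $(n-s-1)$-th derivative is the coefficient of $t^{n-s-1}$ times $(n-s-1)!$. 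It vanishes when $n-s-1$ is odd, which is the second claim. When $n-s-1=2k$ it equals
\[
(-1)^k(2k)!\binom{\alpha}{k}.
\]

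The main obstacle is the bookkeeping needed to turn this into \eqref{E:lambda_n(sigma_s)}. We write
\[
\binom{\alpha}{k}=\frac{\Gamma(\alpha+1)}{k!\,\Gamma(\alpha-k+1)},\qquad \alpha-k+1=\frac{n+s+d}{2},
\]
and combine with $R_{n,d}=\Gamma(\frac{d-1}{2})/\big(2^n\Gamma(n+\frac{d-1}{2})\big)$, noting $\Gamma(\alpha+1)=\Gamma(n+\frac{d-1}{2})$, which cancels. The duplication formula
\[
(2k)!=\frac{2^{2k}\,k!\,\Gamma(k+\frac12)}{\sqrt\pi},\qquad k+\tfrac12=\tfrac{n-s}{2},
\]
then removes $k!$ and produces $\Gamma(\frac{n-s}{2})/\sqrt\pi$. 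The powers of $2$ combine to $2^{2k-n}=2^{-s-1}$. Finally, the overall sign is collected from $(-1)^n$, the $(-1)^s$ and $(-1)$ from the integrations by parts, and $(-1)^k$. Since $n-s$ is odd, $(-1)^{n+s+1}=1$, so the sign reduces to $(-1)^{k}=(-1)^{\frac{n-s-1}{2}}$, matching \eqref{E:lambda_n(sigma_s)}.
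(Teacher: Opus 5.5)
Your proof is correct and follows the paper's argument essentially step by step: Rodrigues' formula, repeated integration by parts with vanishing boundary terms at $t=1$, a positive Beta-type integral when $n\le s$, and for $n>s$ the Taylor coefficient of $(1-t^2)^{n+\frac{d-3}{2}}$ at $t=0$ simplified via the duplication formula. Your sign and power-of-two bookkeeping checks out. The parity remark at the start of the case $n\le s$ is superfluous: the explicit integral $\frac{s!}{(s-n)!}\int_0^1 t^{s-n}(1-t^2)^{n+\frac{d-3}{2}}\,\d t>0$ already settles that case, and its sign $(-1)^n$ cancels against the one from Rodrigues' formula.
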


\begin{proof}
By Rodrigues representation formula \eqref{E:Rodrigues-formula} for Legendre polynomials, we have
\begin{align*}
\lambda_n(\sigma_s) &= \omega_{d-2} \int_{-1}^{1} \sigma_s(t)P_{n,d}(t)(1-t^2)^{\frac{d-3}{2}}~{\rm d}t\\
&= (-1)^n \omega_{d-2} R_{n,d} \int_{0}^{1} t^s \left(\frac{\d}{\d t}\right)^n (1-t^2)^{n+\frac{d-3}{2}}~{\rm d}t. 
\end{align*}
By integrating by parts once, we have
\begin{align*}
	\int_{0}^{1} t^s \left(\frac{\d}{\d t}\right)^n (1-t^2)^{n+\frac{d-3}{2}}~\d t
	& = \int_{0}^{1} t^s \frac{\d }{\d t} \left(\frac{\d}{\d t}\right)^{n-1} (1-t^2)^{n+\frac{d-3}{2}} \\
	& = -s \int_{0}^{1} t^{s-1} \left(\frac{\d}{\d t}\right)^{n-1} (1-t^2)^{n+\frac{d-3}{2}}~{\rm d}t.
\end{align*}
If \(n\leq s\), then integrating by parts \(n\) times gives
\[ 
\lambda_n(\sigma_s)  = (-1)^n \omega_{d-2} R_{n,d} (-1)^n (s)_n
\int_{0}^{1} t^{s-n} (1-t^2)^{n+\frac{d-3}{2}}~\d t > 0. 
\]
If \(n> s\), then integration by parts for \(n\) times gives
	\begin{align*}
		\lambda_n(\sigma_s) & = (-1)^n \omega_{d-2} R_{n,d} (-1)^s s!
		\int_{0}^{1} \left(\frac{\d}{\d t}\right)^{n-s} (1-t^2)^{n+\frac{d-3}{2}}~\d t \\
		& = (-1)^{n+s+1} \omega_{d-2} R_{n,d} s! 
		\left. \left(\frac{\d}{\d t}\right)^{n-s-1} (1-t^2)^{n+\frac{d-3}{2}} \right|_{t=0}.
	\end{align*}
	For \(n\not\equiv s \!\mod 2\), using series expansion of $(1-t^2)^{n+\frac{d-3}{2}}$, we have 
	\begin{align*}
		\left. \left(\frac{\d}{\d t}\right)^{n-s-1}(1-t^2)^{n+\frac{d-3}{2}} \right|_{t=0}
		& = \left.  \left(\frac{\d}{\d t}\right)^{n-s-1}
		\sum_{i=0}^{\infty}\binom{n+\frac{d-3}{2}}{i}(-1)^{i} t^{2i}\right|_{t=0} \\
		& = (-1)^{\frac{n-s-1}{2}} \binom{n+\frac{d-3}{2}}{\frac{n-s-1}{2}}(n-s-1)!,
	\end{align*}
	while it vanishes for \(n\equiv s \!\mod 2\).
	By Legendre duplication formula for the Gamma function $\Gamma(z)$, i.e.,  
$\Gamma (z)\Gamma \left(z+{\tfrac {1}{2}}\right)=2^{1-2z}\;{\sqrt {\pi }}\;\Gamma (2z)$, and the 
	formula \(\binom{\alpha}{\beta} = \frac{\Gamma(\alpha+1)}{\Gamma(\beta+1)\Gamma(\alpha-\beta+1)}\), we have 
	\begin{align*}
		R_{n,d}\binom{n+\frac{d-3}{2}}{\frac{n-s-1}{2}} (n-s-1)!
		& = \frac{\Gamma(\frac{d-1}{2})}{2^n\Gamma(n+\frac{d-1}{2})}
		\frac{\Gamma(n+\frac{d-1}{2})\Gamma(n-s)}{\Gamma(\frac{n-s+1}{2})\Gamma(\frac{n+s+d}{2})}\\
		& = \frac{\Gamma(n-s)}{2^n \Gamma(\frac{n-s+1}{2})}
		\frac{\Gamma(\frac{d-1}{2})}{\Gamma(\frac{n+s+d}{2})} 
		=  \frac{\Gamma(\frac{n-s}{2})}{2^{s+1}\sqrt{\pi}}
		\frac{\Gamma(\frac{d-1}{2})}{\Gamma(\frac{n+s+d}{2})}.
	\end{align*}
Combining these identities gives the identity \eqref{E:lambda_n(sigma_s)} for \(n>s, n\not\equiv s \!\mod 2\).
\end{proof}

Next, we give upper and lower bounds on the matrix \(D_n^\top D_n\). 
\begin{lemma}\label{L:lambda(D_n^TD_n)}
Given the nonzero sampling points \(X=\{x_\ell\}_{\ell=1}^N\), \(\lambda_{\max}(D_n^\top D_n)\) satisfies
\[ \lambda_{\max}\left(D_n^\top D_n\right) \leq \frac{N N_{n,d}}{\omega_{d-1}}, \quad \text{ for } ~d\geq 2, ~ n\geq 0. 
\]
Moreover, if \(X\) is \(\delta\)-separated, then  
\[ \lambda_{\min}\left(D_n^\top D_n\right) \geq \frac{N_{n,d}}{2\omega_{d-1}}, \quad \text{ for } 
	~ d\geq 3, ~ n\geq K_1:=\frac{4}{\delta^2}\left[\frac{2N\Gamma\left(\frac{d-1}{2}\right)}{\sqrt{\pi}}\right]^{\frac{2}{d-2}}. \]
\end{lemma}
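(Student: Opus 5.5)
The argument runs through the Gram structure of \(D_n^\top D_n\). By the addition formula \eqref{E:addition-formula}, its \((\ell,m)\) entry is
\[ (D_n^\top D_n)_{\ell m} = \sum_{j=1}^{N_{n,d}} Y_{n,j}(\ol{x}_\ell)Y_{n,j}(\ol{x}_m) = \frac{N_{n,d}}{\omega_{d-1}} P_{n,d}(\ol{x}_\ell\cdot\ol{x}_m). \]
So \(D_n^\top D_n = \frac{N_{n,d}}{\omega_{d-1}} M_n\), where \(M_n := [P_{n,d}(\ol{x}_\ell\cdot\ol{x}_m)]_{\ell,m\in[N]}\) has unit diagonal because \(P_{n,d}(1)=1\). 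Both bounds then follow from Gershgorin's theorem applied to \(M_n\).

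\textbf{Upper bound.} Since \(|P_{n,d}(t)|\le 1\) on \([-1,1]\), every entry of \(M_n\) has modulus at most one. Hence each Gershgorin row sum is at most \(N\), and \(\lambda_{\max}(M_n)\le N\). Multiplying by \(N_{n,d}/\omega_{d-1}\) gives the first claim for all \(d\ge2\) and \(n\ge0\).

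\textbf{Lower bound.} Gershgorin gives
\[ \lambda_{\min}(M_n)\ge 1-\sum_{m\ne\ell}|P_{n,d}(\ol{x}_\ell\cdot\ol{x}_m)|\ge 1-(N-1)\max_{m\neq \ell}|P_{n,d}(t_{\ell m})|. \]
The \(\delta\)-separation \eqref{E:delta-sepa} says \(1-t_{\ell m}^2\ge\delta^2\) for every \(m\neq\ell\). It therefore suffices to prove the pointwise decay estimate
\[ |P_{n,d}(t)|\le \frac{\Gamma(\frac{d-1}{2})}{\sqrt{\pi}}\left(\frac{4}{n(1-t^2)}\right)^{\frac{d-2}{2}}, \quad |t|<1,\ d\ge3. \]
This should be the inequality \eqref{E:Ine-Legendre} cited in Remark \ref{R:bound-min}. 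Given it, if \(n\ge K_1\) then \(4/(n\delta^2)\le [\sqrt\pi/(2N\Gamma(\frac{d-1}{2}))]^{2/(d-2)}\). Hence each off-diagonal entry is at most \(1/(2N)\), the off-diagonal row sum is below \(1/2\), and \(\lambda_{\min}(M_n)\ge 1/2\), which gives the stated bound on \(\lambda_{\min}(D_n^\top D_n)\).

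\textbf{Main obstacle: the decay estimate.} I would start from the Laplace-type integral representation of Gegenbauer/Legendre polynomials, valid for \(d\ge3\):
\[ P_{n,d}(t)=\frac{\omega_{d-3}}{\omega_{d-2}}\int_{-1}^1 \bigl(t+i\sqrt{1-t^2}\,\xi\bigr)^n(1-\xi^2)^{\frac{d-4}{2}}\,\d\xi. \]
Taking absolute values gives
\[ |P_{n,d}(t)|\le c_d\int_{-1}^1 \bigl(1-(1-t^2)(1-\xi^2)\bigr)^{n/2}(1-\xi^2)^{\frac{d-4}{2}}\,\d\xi. \]
Next I would use \(1-a\le e^{-a}\) and extend the integral in the variable \(\xi\) so that it becomes a Gamma integral. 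This yields the order \((n(1-t^2))^{-(d-2)/2}\), and the constants should be tracked so that they match \(\Gamma(\frac{d-1}{2})/\sqrt\pi\) with the factor \(4\). Should that representation prove awkward, the fallback is the Rodrigues formula \eqref{E:Rodrigues-formula} combined with a Cauchy-integral bound for the \(n\)-th derivative on a circle of radius about \(\sqrt{1-t^2}\); this gives the same decay.

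The restriction \(d\ge3\) enters exactly here: for \(d=2\) the weight exponent in the representation is negative and \(P_{n,2}(\cos\theta)=\cos n\theta\) does not decay, in line with Remark \ref{R:bound-min}.
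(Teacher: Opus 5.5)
Your proposal is correct and follows the paper's proof essentially line by line. The addition formula gives $D_n^\top D_n=\frac{N_{n,d}}{\omega_{d-1}}\bigl[P_{n,d}(\ol{x}_\ell\cdot\ol{x}_m)\bigr]_{\ell,m}$, and Gershgorin's theorem then yields both estimates: with $|P_{n,d}|\le 1$ for the upper bound, and with the decay bound \eqref{E:Ine-Legendre} plus $1-t^2\ge\delta^2$ for the lower bound, with exactly the threshold $K_1$.

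The paper does not derive \eqref{E:Ine-Legendre}; it cites it from Atkinson--Han, and that citation is all your argument needs. If you do want your own derivation, the substitution $y=1-\xi$ with $1-\xi\le 1-\xi^2\le 2(1-\xi)$ on $[0,1]$ reproduces the constant $4$ exactly for $d\ge 4$. For $d=3$, however, this crude tracking loses a factor $\sqrt{2}$, so you would need a sharper estimate there, for example Bernstein's inequality $|P_{n,3}(\cos\theta)|<\sqrt{2/(\pi n\sin\theta)}$.
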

\begin{proof}
Note that the \((i,\ell)\)-entry of the matrix \(D_n^\top D_n\) is the inner product of 
the \(i\)-th and \(\ell\)-th rows of \(D_n\). By the addition formula \eqref{E:addition-formula} for spherical harmonics, we have
\[ D_n^\top D_n(i,\ell) = \sum_{j=1}^{N_{n,d}} Y_{n,j}(\ol{x}_i)Y_{n,j}(\ol{x}_\ell)
	= \frac{N_{n,d}}{\omega_{d-1}} P_{n,d}(\ol{x}_i\cdot\ol{x}_\ell), \quad i, \ell\in [N]. \]  
Note that
\( P_{n,d}(1) = 1\) and \(|P_{n,d}(t)|\leq 1\), \(t\in[0,1] \) for all \(d\geq 2\) and \(n\geq 0\).
Thus \(|P_{n,d}(\ol{x}_i\cdot\ol{x}_\ell)|\leq 1\) for all \(i,\ell\in[N]\). For each \(i\in[N]\), we have 
\begin{align*}
	D_n^\top D_n(i,i) = \frac{N_{n,d}}{\omega_{d-1}} P_{n,d}(|\ol{x}_i|^2)
	= \frac{N_{n,d}}{\omega_{d-1}} P_{n,d}(1) = \frac{N_{n,d}}{\omega_{d-1}}.
\end{align*} 
By the Gershgorin circle theorem,
the largest eigenvalue of \(D_n^\top D_n\) satisfies
\begin{align*}
	\lambda_{\max}\left(D_n^\top D_n\right) 
	\leq& \max_{i\in[N]} \Big\{ D_n^\top D_n(i,i) + \sum_{\ell\neq i} 
	\big|D_n^\top D_n(i,\ell)\big| \Big\} 
	 \leq \frac{N_{n,d}}{\omega_{d-1}} + (N-1)\frac{N_{n,d}}{\omega_{d-1}}
	= \frac{N N_{n,d}}{\omega_{d-1}}. 
\end{align*}
To obtain a lower bound on \(\lambda_{\min}\left(D_n^\top D_n\right)\), we use the following inequality for 
the Legendre polynomial $P_{n,d}(t)$ for the points away from the ends \cite[(2.117)]{Atkinson:2012}:
\begin{equation}\label{E:Ine-Legendre}
	\left| P_{n,d}(t) \right| < \frac{\Gamma\left(\frac{d-1}{2}\right)}{\sqrt{\pi}} 
\left[ \frac{4}{n \left(1 - t^2\right)} \right]^{\frac{d-2}{2}}, \quad n \geq 0, 
 d\geq 3, t \in (-1,1).
\end{equation}
For \(\ell\neq i\), by the \(\delta\)-separated assumption \eqref{E:delta-sepa}, 
we have \(1 - \llan \ol{x}_i,\ol{x}_\ell \rran ^2 \geq \delta^2\) and thus 
\begin{align*}
	\big|D_n^\top D_n(i,\ell)\big| \leq& \frac{N_{n,d}}{\omega_{d-1}}
	\frac{\Gamma\left(\frac{d-1}{2}\right)}{\sqrt{\pi}} 
	\left[ \frac{4}{1 - \llan \ol{x}_i,\ol{x}_\ell \rran ^2} \right]^{\frac{d-2}{2}}
	\left(\frac{1}{n}\right)^{\frac{d-2}{2}} 
	\leq \frac{N_{n,d}}{\omega_{d-1}}
	\frac{\Gamma\left(\frac{d-1}{2}\right)}{\sqrt{\pi}} 
	\left( \frac{2}{\delta} \right)^{d-2}\left(\frac{1}{n}\right)^{\frac{d-2}{2}}. 
\end{align*}
By the Gershgorin circle theorem again, the smallest eigenvalue \(\lambda_{\min}(D_n^\top D_n)\) satisfies
\begin{align*}
	\lambda_{\min}\left(D_n^\top D_n\right) 
	& \geq \min_{i\in[N]} \left\{ D_n^\top D_n(i,i) - \sum_{\ell\neq i} 
	\big|D_n^\top D_n(i,\ell)\big| \right\} \\
	& \geq \frac{N_{n,d}}{\omega_{d-1}} - (N-1)\frac{N_{n,d}}{\omega_{d-1}}
	\frac{\Gamma\left(\frac{d-1}{2}\right)}{\sqrt{\pi}} 
	\left( \frac{2}{\delta} \right)^{d-2}\left(\frac{1}{n}\right)^{\frac{d-2}{2}}. 
\end{align*}
The conclusion is obtained if \(n\) is large enough such that 
\[ N\frac{\Gamma\left(\frac{d-1}{2}\right)}{\sqrt{\pi}} 
	\left( \frac{2}{\delta} \right)^{d-2}\left(\frac{1}{n}\right)^{\frac{d-2}{2}}\leq \frac{1}{2}
	\quad \Longleftrightarrow \quad n\geq K_1 := \frac{4}{\delta^2}
\left[\frac{2N\Gamma\left(\frac{d-1}{2}\right)}{\sqrt{\pi}}\right]^{\frac{2}{d-2}}. \]
\end{proof}

Next we bound \(\min\left\{ \lambda_n^2(\sigma_{s-2}), \lambda_n^2(\sigma_s) \right\}\) for \(s\geq 3\) 
and \(n>s, n\not\equiv s \!\mod 2\). Let \(I_{n,s}\) be the ratio of Gamma functions arising in equation \eqref{E:lambda_n(sigma_s)} whose arguments involve \(n\):
\begin{equation}\label{E:I_{n,s}}
	I_{n,s} = \frac{\Gamma(\frac{n-s}{2})}{\Gamma(\frac{n+s+d}{2})}.
\end{equation}
Note that \(\Gamma(x)\) is increasing in the interval \([2,\infty)\). When choosing \(n\) large enough 
as in Lemma \ref{L:lambda(D_n^TD_n)}, \(I_{n,s}\) is decreasing with respect to \(s\) if \(s\leq n-4\).
By the formula \eqref{E:lambda_n(sigma_s)}, we have 
\begin{equation}\label{E:lambda_n(s,s-2)}
	|\lambda_n(\sigma_s)| = \omega_{d-2}\frac{\Gamma(\frac{d-1}{2})}{\sqrt{\pi}}
	\frac{s!}{2^{s+1}} I_{n,s}\quad\mbox{and} \quad 
	\left|\lambda_n(\sigma_{s-2})\right| = \omega_{d-2}\frac{\Gamma(\frac{d-1}{2})}{\sqrt{\pi}}
	\frac{(s-2)!}{2^{s-1}} I_{n,s-2}.
\end{equation}
Note that the ratio \(\frac{s!}{2^{s+1}}\) is increasing for \(s\geq 1\). 
Thus, for \(n>s, n\not\equiv s \!\mod 2\), we have 
\begin{align*}
	\min\left\{ \lambda_n^2(\sigma_{s-2}), \lambda_n^2\left(\sigma_s\right) \right\}
	\geq \omega_{d-2}^2\frac{\Gamma(\frac{d-1}{2})^2}{\pi}
	\frac{((s-2)!)^2}{2^{2s-2}} I_{n,s}^2.
\end{align*}
From equation \eqref{E:sum>lambda*lambda}, 
for \(n\geq K_1>s, ~ n\not\equiv s \!\mod 2\) (with $K_1$ given in Lemma \ref{L:lambda(D_n^TD_n)}) and any \(w\in\mathbb{S}^{N-1}\), we have  
\begin{align}
\sum_{j=1}^{N_{n,d}} |g_{n,j}(w)|^2 &\geq b_{s}^2(X) \omega_{d-2}^2\frac{\Gamma(\frac{d-1}{2})^2}{\pi}
	\frac{((s-2)!)^2}{2^{2s-2}} I_{n,s}^2 \frac{N_{n,d}}{2\omega_{d-1}} \nonumber\\
	&= b_{s}^2(X) \frac{((s-2)!)^2}{2^{2s-1}}\frac{\omega_{d-2}^2}{\omega_{d-1}}\frac{\Gamma(\frac{d-1}{2})^2}{\pi} 
	I_{n,s}^2N_{n,d}. \label{E:sum>I_{n,s}^2*lambda}
\end{align}

Next we estimate the factor \(I_{n,s}^2N_{n,d}\). We first recall two bounds on the Gamma function. 
The first can be found in \cite[Theorem 1]{Jameson:2015simple}, and the second in \cite{Robbins:1955remark}.
\begin{lemma}\label{L:bnd-Gamma}
	For all \(x>0\) and \(1\leq n\in \N\), we have 
	\begin{align*}
		\sqrt{2\pi}x^{x-\frac{1}{2}}e^{-x}\leq & ~\Gamma(x)~ 
		\leq \sqrt{2\pi}x^{x-\frac{1}{2}}e^{-x}e^{\frac{1}{12x}}, \\
		\sqrt{2\pi}n^{n+\frac{1}{2}}e^{-n}\leq & ~n!~
		\leq \sqrt{2\pi}n^{n+\frac{1}{2}}e^{-n}e^{\frac{1}{12n}}.
	\end{align*}
\end{lemma}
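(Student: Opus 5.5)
Both estimates are classical, and the paper cites them (Jameson and Robbins). I would give a short self-contained derivation from two standard facts: Binet's integral representation of \(\log\Gamma\), and an elementary bound on the Stirling remainder. The two displays are really one statement, since \(n! = n\Gamma(n)\). Multiplying the first display at \(x=n\) by \(n\) turns \(n^{n-\frac12}\) into \(n^{n+\frac12}\) and leaves the exponential factors unchanged, which gives the second display. So it suffices to prove, for all real \(x>0\),
\[
\Gamma(x) = \sqrt{2\pi}\,x^{x-\frac12}e^{-x}e^{\mu(x)}, \qquad 0\le \mu(x)\le \frac{1}{12x}.
\]

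For this I would use Binet's first formula,
\[
\mu(x)=\int_0^\infty \Big(\frac{1}{e^t-1}-\frac1t+\frac12\Big)\frac{e^{-xt}}{t}\,\d t .
\]
The integrand's bracket \(h(t)=\frac{1}{e^t-1}-\frac1t+\frac12\) is nonnegative on \((0,\infty)\). This is equivalent to \(\coth(t/2)\ge 2/t\), i.e. \(\tanh u\le u\) with \(u=t/2\). Hence \(\mu(x)\ge 0\), which is the lower bound.

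For the upper bound I would show \(h(t)\le t/12\). Writing \(h(t)=\frac12\coth(t/2)-\frac1t\), this is equivalent to \(\coth u \le \frac1u+\frac u3\) for \(u>0\). That inequality follows from the partial-fraction expansion
\[
\coth u=\frac1u+\sum_{k\ge1}\frac{2u}{u^2+k^2\pi^2}\le \frac1u+2u\sum_{k\ge 1}\frac{1}{k^2\pi^2}=\frac1u+\frac u3 .
\]
Then
\[
\mu(x)\le \frac{1}{12}\int_0^\infty e^{-xt}\,\d t=\frac1{12x}.
\]

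The only step needing care is justifying Binet's formula itself, i.e. pinning down the constant \(\sqrt{2\pi}\). If one prefers to avoid quoting it, an alternative route works for integer \(n\). Set \(d_n=\log n!-(n+\frac12)\log n+n\). Expanding \(\log\frac{1+1/(2n+1)}{1-1/(2n+1)}\) in odd powers shows
\[
0< d_n-d_{n+1}\le \frac{1}{12n}-\frac{1}{12(n+1)} .
\]
So \(d_n\) decreases while \(d_n-\frac1{12n}\) increases. Their common limit is \(\log\sqrt{2\pi}\) by Wallis' product. This gives the factorial bounds directly.

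The real-\(x\) version would then follow either by the Binet argument above, or by applying the same monotonicity argument to the function \(x\mapsto\log\Gamma(x)-(x-\frac12)\log x+x\) with step 1 and the functional equation \(\Gamma(x+1)=x\Gamma(x)\). I expect the constant identification to be the only nonroutine point. Given that the paper is content to cite these bounds, I would simply invoke the two references for it.
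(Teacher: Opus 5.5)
Your proof is correct, but it does not follow the paper, because the paper gives no proof here. It quotes \cite[Theorem 1]{Jameson:2015simple} for the real-variable bound and \cite{Robbins:1955remark} for the factorial bound, as two independent facts. Your proposal differs in two useful ways.

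\begin{itemize}
\item You note that the factorial display follows from the Gamma display via \(n!=n\Gamma(n)\), so the second citation is unnecessary.
\item You actually derive the Gamma bound. The rewriting \(h(t)=\tfrac12\coth(t/2)-\tfrac1t\) is right. The inequality \(\tanh u\le u\) gives \(\mu\ge 0\). The partial-fraction expansion of \(\coth\) gives \(h(t)\le t/12\), hence \(\mu(x)\le \frac{1}{12x}\). All of this is correct and handles every real \(x>0\) at once.
\end{itemize}

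What your route buys is a short, transparent argument. The cost is that all the depth, including the constant \(\sqrt{2\pi}\), is delegated to Binet's formula, a heavier classical result. Your integer-only alternative is exactly Robbins' proof.

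One caution about your optional real-variable monotonicity route. The functional equation together with \(0<D(x)-D(x+1)\le \frac{1}{12x}-\frac{1}{12(x+1)}\) only shows that \(D(x+k)\) decreases, as \(k\to\infty\), to some limit \(C(x)\) that is \(1\)-periodic in \(x\). This cannot show \(C(x)=\log\sqrt{2\pi}\) for non-integer \(x\). Indeed, \(\Gamma(x)e^{p(x)}\) with any \(1\)-periodic \(p\) satisfies the same functional equation and has exactly the same differences \(D(x)-D(x+1)\). You need an extra input, such as log-convexity of \(\Gamma\), which gives \(\Gamma(k+x)\sim k^{x}\Gamma(k)\) and hence that \(C(x)\) equals its value at the integers. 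So that step is more than identifying a single constant, as you suggested. It does not affect your main, Binet-based proof.
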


\begin{lemma}\label{L:I_{n,s}^2*N_{n,d}}
	Let \(I_{n,s}\) be defined by \eqref{E:I_{n,s}} and \(N_{n,d}\) given by \eqref{E:N_{n,d}}
	for \(s\geq 0, d\geq 2\) and \(n\geq s+1\), i.e.,  
	\[ I_{n,s} = \frac{\Gamma(\frac{n-s}{2})}{\Gamma(\frac{n+s+d}{2})} \quad \text{ and } \quad
	N_{n,d} = \frac{(2n+d-2)(n+d-3)!}{(d-2)!n!}. \]
	Then the following inequalities hold
	\[  \frac{2^{2s+d+1}}{(d-2)!}\frac{e^{-1}}{(n+s+d)^{s+2}n^{s}}
		\leq I_{n,s}^2 N_{n,d}\leq 
		\frac{2^{2s+d+1}}{(d-2)!}\frac{e^{2s+4}}{(n+s+d)^{s}n^{s+2}}. \]
\end{lemma}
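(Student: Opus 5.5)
The plan is to estimate the two factors of \(I_{n,s}^2 N_{n,d}\) separately using Lemma \ref{L:bnd-Gamma} and then combine them. Both quantities are ratios of Gamma functions (or factorials), so in each case we apply the Stirling-type lower bound to the numerator and the upper bound to the denominator, or the reverse. Any exponential factors \(e^{1/(12x)}\) that appear are absorbed into the constants \(e^{-1}\) and \(e^{2s+4}\).

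For \(N_{n,d}\), we write \(\frac{(n+d-3)!}{n!}=\frac{\Gamma(n+d-2)}{\Gamma(n+1)}\). This ratio is a product of \(d-3\) factors lying between \(n+1\) and \(n+d-3\). We therefore expect elementary bounds of the form
\[
\frac{N_{n,d}}{(2n+d-2)} \in \left[\frac{n^{d-3}}{(d-2)!},\ \frac{(n+s+d)^{d-3}}{(d-2)!}\right],
\]
with the \(d=2\) case checked directly. Combining with \(2n\le 2n+d-2\le 2(n+s+d)\), this gives
\[
\frac{2n^{d-2}}{(d-2)!} \lesssim N_{n,d}\lesssim \frac{2(n+s+d)^{d-2}}{(d-2)!}.
\]

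For \(I_{n,s}\), we set \(a=\frac{n-s}{2}\) and \(b=\frac{n+s+d}{2}\), so that \(b-a=s+\frac d2\). Lemma \ref{L:bnd-Gamma} gives
\[
I_{n,s}\ge \frac{a^{a-1/2}e^{-a}}{b^{b-1/2}e^{-b}e^{1/(12b)}}=e^{\,b-a-1/(12b)}\,\frac{a^{a-1/2}}{b^{b-1/2}}.
\]
We then rewrite \(\frac{a^{a-1/2}}{b^{b-1/2}}=b^{-(b-a)}(a/b)^{a-1/2}\) and bound \((a/b)^{a}=(1-\frac{b-a}{b})^{a}\ge e^{-(b-a)a/(b-a)\cdots}\) via \(\log(1-x)\ge -x/(1-x)\). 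This should yield \((a/b)^{a}\ge e^{-(b-a)}\). The factor \(e^{b-a}\) then cancels, leaving roughly \(I_{n,s}^2\ge b^{-(2s+d)}\,(b/a)\,e^{-1/(6b)}\). Since \(b=(n+s+d)/2\), we get \(b^{-(2s+d)}=2^{2s+d}(n+s+d)^{-(2s+d)}\). Multiplying by the lower bound for \(N_{n,d}\) gives \(2^{2s+d+1}(n+s+d)^{-(2s+d)}n^{d-2}/(d-2)!\).

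The target form \((n+s+d)^{-(s+2)}n^{-s}\) is not literally the same. We reconcile the two by trading powers, using \(n\le n+s+d\) (and the factor \(b/a\ge1\)) to move exponents between \(n\) and \(n+s+d\) in the favorable direction. The residual exponentials are controlled by \(e^{-1}\). The upper bound is handled symmetrically. There we use \(\Gamma(a)\le \sqrt{2\pi}a^{a-1/2}e^{-a}e^{1/(12a)}\) and \((b/a)^{a}\le e^{(b-a)\cdot b/a}\), and since \(n\ge s+1\), the crude bound \(b/a\le\cdots\) lets all such factors be absorbed into \(e^{2s+4}\). The \(n^{-(s+2)}\) then arises from \(a\ge\) const\(\cdot n\)-type comparisons.

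The main obstacle is precisely this bookkeeping of exponents. The raw Stirling estimate naturally produces \((n+s+d)^{-(2s+d)}\) against \(n^{d-2}\), and we must verify that every trade of powers between \(n\), \(a\), and \(n+s+d\) goes in the correct direction for each inequality. We must also check that the accumulated factors \((b/a)^{1/2}\), \(e^{1/(12a)}\), and \(e^{(b-a)(b/a-1)}\) genuinely fit within \(e^{-1}\) and \(e^{2s+4}\) over the full range \(n\ge s+1\). I would first work out small cases, \(d=2,3\) with \(n=s+1\), to calibrate the constants before writing the general chain.
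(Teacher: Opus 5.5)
There is a genuine gap. You have the right tool, and the paper also starts by bounding each Gamma factor with Lemma \ref{L:bnd-Gamma}. The failure is that you collapse each factor to a pure power before combining: $I_{n,s}^2$ becomes a power of $b=\frac{n+s+d}{2}$, and $N_{n,d}$ becomes $n^{d-2}$ or $(n+s+d)^{d-2}$ via the elementary product bound. This throws away factors that are exponentially large in $s$ and $d$, which the constants $e^{-1}$ and $e^{2s+4}$ cannot absorb, and the proposed trading of powers goes the wrong way.

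\textbf{Lower bound.} Your estimates give at best
\[
I_{n,s}^2N_{n,d}\ \ge\ \frac{2^{2s+d+1}}{(d-2)!}\,\frac{n^{d-2}}{(n+s+d)^{2s+d}}\cdot\frac{n+s+d}{n-s}\,e^{-1/(6b)}.
\]
To reach the target you would need $\bigl(\frac{n}{n+s+d}\bigr)^{s+d-2}\frac{n+s+d}{n-s}\ge e^{-1+1/(6b)}$. That amounts to bounding $n$ from below by $n+s+d$; the inequality $n\le n+s+d$ points the other way, and $b/a$ supplies only one of the $s+d-2$ powers needed.
\begin{itemize}
\item At $s=d=3$, $n=4$ the left side is $0.4^4\cdot 10=0.256<e^{-29/30}\approx 0.38$.
\item Along $n=s+1$, for fixed $d$, it decays like $s\,2^{-s}$.
\end{itemize}
Yet the lemma holds there: $I_{4,3}^2N_{4,3}=\pi/64\approx 0.049$, against a target of about $5.9\cdot 10^{-5}$. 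The losses come from two places. The bound $(a/b)^a\ge e^{-(b-a)}$ is very crude when $a=\frac{n-s}{2}$ is small. The bound $(n+d-3)!/n!\ge n^{d-3}$ is crude when $d\gg n$.

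\textbf{Upper bound.} Here the defect is the mirror image. The Stirling exponentials of $\Gamma(a)/\Gamma(b)$ contribute $e^{2(b-a)}=e^{2s+d}$. If you drop $(a/b)^a\le 1$ and use $N_{n,d}\le 2(n+s+d)^{d-2}/(d-2)!$, then as $n\to\infty$ with $s,d$ fixed you get $\frac{2^{2s+d+1}}{(d-2)!}e^{2s+d}n^{-2s-2}(1+o(1))$. This exceeds the target by $e^{d-4}$, so it fails for $d\ge 5$.
\begin{itemize}
\item Keeping the sharper $(a/b)^a\le e^{-a(b-a)/b}$ does not rescue it. At $n=1$, $s=0$ your bound on $N_{n,d}$ overestimates $N_{1,d}=d$ by a factor of order $e^{d}$ up to polynomial factors, leaving roughly $e^{d}/d$ against $e^{4}$.
\item A comparison $a\ge c\,n$ cannot hold uniformly, since $a=\frac12$ at $n=s+1$.
\end{itemize}

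\textbf{What is missing.} Apply Lemma \ref{L:bnd-Gamma} to $(n+d-3)!/n!$ as well. It then carries a factor $e^{-(d-3)}$ that cancels the $e^{d}$ coming from $I_{n,s}^2$. Then regroup crosswise:
\begin{itemize}
\item pair $(n-s)^{n-s-1}$ from $\Gamma(\frac{n-s}{2})^2$ with $n^{n+\frac12}$ from $n!$;
\item pair $(n+d-3)^{n+d-\frac52}$ from $(n+d-3)!$ with $(n+s+d)^{n+s+d-1}$ from $\Gamma(\frac{n+s+d}{2})^2$.
\end{itemize}
This gives, for some $c\in[-\frac14,\frac{5}{12}]$,
\[
I_{n,s}^2N_{n,d}=e^{c}\,\frac{2^{2s+d+1}e^{2s+3}}{(d-2)!}\,\frac{n+\frac d2-1}{(n+s+d)^{s+2}n^{s+1}}\sqrt{\frac{n+d-3}{n}}\,\Bigl(\frac{n-s}{n}\Bigr)^{n-s-1}\Bigl(\frac{n+d-3}{n+s+d}\Bigr)^{n+d-3}.
\]
The last two factors lie in $[e^{-s},1]$ and $[e^{-(s+3)},1]$. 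Also $\sqrt{(n+d-3)/n}\ge e^{-1/2}$, except in the case $d=2$, $n=1$, which you check by hand. The mixed powers $(n+s+d)^{-(s+2)}n^{-s}$ and $(n+s+d)^{-s}n^{-(s+2)}$, together with the constants $e^{-1}$ and $e^{2s+4}$, then come out directly.
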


\begin{proof}
	
If \(n+d-3=0\), we must have \(d=2,n=1\) and \(s=0\). Then, we have \(I_{n,s}=\frac{1}{2}\), \(N_{n,d}=2\), 
and the inequalities can be verified directly. Next, suppose \(n+d-3>0\).
By Lemma \ref{L:bnd-Gamma}, we have 
\begin{align*}
	\frac{\frac{4\pi}{n-s}\left( \frac{n-s}{2e} \right)^{n-s} }
	{\frac{4\pi}{n+s+d}\left( \frac{n+s+d}{2e} \right)^{n+s+d} e^{\frac{1}{12}}}
	\leq  I_{n,s}^2 = \frac{\Gamma(\frac{n-s}{2})^2}{\Gamma(\frac{n+s+d}{2})^2} \leq 
	\frac{\frac{4\pi}{n-s}\left( \frac{n-s}{2e} \right)^{n-s}e^{\frac{1}{3}}}
	{\frac{4\pi}{n+s+d}\left( \frac{n+s+d}{2e} \right)^{n+s+d}}, 
\end{align*}
namely, 
\[ \frac{(n-s)^{n-s-1}(2e)^{2s+d}e^{-\frac{1}{12}}}{(n+s+d)^{n+s+d-1}}
	\leq   I_{n,s}^2 \leq \frac{(n-s)^{n-s-1}(2e)^{2s+d}e^{\frac{1}{3}}}{(n+s+d)^{n+s+d-1}}. \]
Similarly, for \(n+d-3\geq 1\), we have 
\begin{align*}
	\frac{\sqrt{2\pi(n+d-3)}\left(\frac{n+d-3}{e}\right)^{n+d-3}}
	{\sqrt{2\pi n}\left(\frac{n}{e}\right)^{n}e^{\frac{1}{6}}}
	\leq & ~ \frac{(n+d-3)!}{n!} \leq 
	\frac{\sqrt{2\pi(n+d-3)}\left(\frac{n+d-3}{e}\right)^{n+d-3}e^{\frac{1}{12}}}
	{\sqrt{2\pi n}\left(\frac{n}{e}\right)^{n}}, 
\end{align*}
namely,
\begin{align*}	
\frac{(n+d-3)^{n+d-\frac{5}{2}}e^{-\frac{1}{6}}}{n^{n+\frac{1}{2}}e^{d-3}}
	\leq & ~ \frac{(n+d-3)!}{n!} \leq \frac{(n+d-3)^{n+d-\frac{5}{2}}e^{\frac{1}{12}}}{n^{n+\frac{1}{2}}e^{d-3}}.
\end{align*}
Combining these inequalities for $I_{n,s}$ and $N_{n,d}$ gives 
\begin{align*}
	I_{n,s}^2 N_{n,d} & \geq \frac{(n-s)^{n-s-1}(2e)^{2s+d}e^{-\frac{1}{12}}}{(n+s+d)^{n+s+d-1}}
	\frac{(2n+d-2)}{(d-2)!}\frac{(n+d-3)^{n+d-\frac{5}{2}}e^{-\frac{1}{6}}}{n^{n+\frac{1}{2}}e^{d-3}}\\
	& = \frac{2^{2s+d+1}e^{2s+\frac{11}{4}}}{(d-2)!}
	\frac{n+\frac{d}{2}-1}{(n+s+d)^{s+2}n^{s+1}}\sqrt{\frac{n+d-3}{n}}
	\left(\frac{n-s}{n}\right)^{n-s-1}\left(\frac{n+d-3}{n+s+d}\right)^{n+d-3} \\
	& \geq \frac{2^{2s+d+1}e^{2s+\frac{9}{4}}}{(d-2)!}\frac{1}{(n+s+d)^{s+2}n^{s}}
	e^{-s}e^{-(s+3)} = \frac{2^{2s+d+1}}{(d-2)!}\frac{e^{-\frac{3}{4}}}{(n+s+d)^{s+2}n^{s}},
\end{align*}
using \(n+\frac{d}{2}-1\geq n\), \(e(n+d-3)\geq n\) and the limit of the decreasing sequence
$(\frac{n-s}{n})^{n-s} = (1 + \frac{s}{n-s})^{-(n-s)}
\xrightarrow{n\to \infty}  e^{-s}$.
Similarly, we have 
\begin{align*}
	& I_{n,s}^2 N_{n,d} \\
    \leq &  \frac{2^{2s+d+1}e^{2s+3+\frac{5}{12}}}{(d-2)!}
	\frac{n+\frac{d}{2}-1}{(n+s+d)^{s+2}n^{s+1}}\sqrt{\frac{n+d-3}{n}}
	\left(\frac{n-s}{n}\right)^{n-s-1}\left(\frac{n+d-3}{n+s+d}\right)^{n+d-3} \\
	\leq &  \frac{2^{2s+d+1}}{(d-2)!}\frac{e^{2s+3+\frac{5}{12}}}{(n+s+d)^{s+\frac{1}{2}}n^{s+\frac{3}{2}}}
	\leq \frac{2^{2s+d+1}}{(d-2)!}\frac{e^{2s+3+\frac{5}{12}}}{(n+s+d)^{s}n^{s+2}}.
\end{align*}
\end{proof}

By combining \eqref{E:Lam(The)>|g|} and \eqref{E:sum>I_{n,s}^2*lambda}, we get for \(s\leq K_1-4\) (with \(K_1\) given in Lemma \ref{L:lambda(D_n^TD_n)}),  
\begin{align*}
	\lambda_{\min}(\Theta^{\infty}_{\net}(X)) 
	& \geq \frac{m_{d,s}}{\omega_{d-1}}\inf_{w\in\mathbb{S}^{N-1}}\|g(u;w)\|_{L^2(\mathbb{S}^{d-1})}
	  = \frac{m_{d,s}}{\omega_{d-1}}\inf_{w\in\mathbb{S}^{N-1}}
		\sum_{n=0}^{\infty} \sum_{j=1}^{N_{n,d}} |g_{n,j}(w)|^2 \\
	& \geq \frac{m_{d,s}}{\omega_{d-1}} \sum_{n\geq K_1, n\not\equiv s}
		b_{s}^2(X) \frac{((s-2)!)^2}{2^{2s-1}}\frac{\omega_{d-2}^2}{\omega_{d-1}}\frac{\Gamma(\frac{d-1}{2})^2}{\pi} 
		I_{n,s}^2N_{n,d} \\
	& = m_{d,s}b_{s}^2(X) \frac{((s-2)!)^2}{2^{2s-1}} \frac{\omega_{d-2}^2}{\omega_{d-1}^2}
		\frac{\Gamma(\frac{d-1}{2})^2}{\pi} \sum_{n\geq K_1, n\not\equiv s} I_{n,s}^2 N_{n,d}.
\end{align*}
Let \(n_1 = 1+K_1\). Then we have
\[ \sum_{n=K_1, n\not\equiv s}^{\infty}I_{n,s}^2 N_{n,d} \geq I_{n_1,s}^2 N_{n_1,d} 
\geq \frac{2^{2s+d+1}}{(d-2)!}\frac{e^{-1}}{(n_1+s+d)^{s+2}n_1^{s}}. \]
By the identities \eqref{E:chi-moment} and \eqref{E:area_sphere}, we obtain 
\begin{align*}
	\lambda_{\min}(\Theta^{\infty}_{\net}(X)) 
	& \geq m_{d,s}b_{s}^2(X) \frac{((s-2)!)^2}{2^{2s-1}} \frac{\omega_{d-2}^2}{\omega_{d-1}^2}
		\frac{\Gamma(\frac{d-1}{2})^2}{\pi} \frac{2^{2s+d+1}}{(d-2)!} \frac{e^{-1}}{(n_1+s+d)^{s+2}n_1^{s}} \\
	& = 2^s\frac{\Gamma(s+\frac{d}{2})}{\Gamma(\frac{d}{2})} b_{s}^2(X) \frac{\Gamma(s-1)^2}{e\pi} 
		\frac{4\pi^{d-1}}{\Gamma(\frac{d-1}{2})^2} \frac{\Gamma(\frac{d}{2})^2}{4\pi^{d}}
		\frac{\Gamma(\frac{d-1}{2})^2}{\Gamma(d-1)} \frac{2^{d+2}}{(n_1+s+d)^{s+2}n_1^{s}} \\
	& = \frac{2^{s+d+2}}{e\pi^2} \frac{\Gamma(s-1)^2\Gamma(s+\frac{d}{2})\Gamma(\frac{d}{2})}{\Gamma(d-1)}
		\frac{b_{s}^2(X)}{(n_1+s+d)^{s+2}n_1^{s}}.
\end{align*}

\subsection{Upper bound on the smallest eigenvalue}
To derive an upper bound on the smallest eigenvalue \(\lambda_{\min}(\Theta^{\infty}_{\net}(X))\), we analyze the two terms 
in \eqref{E:Inner-DNTK} of \(w^\top \Theta^{\infty}_{\net}(X) w\). For the first term,  by \eqref{E:Pro-trans},
we have 
\begin{align*}
	w^\top \Sigma^{\infty}_{\net}(X) w = 
	m_{d,s} \E_{u\sim \mathrm{U}(\mathbb{S}^{d-1})}\left[|g(u;w)|^2\right], \quad w\in \mathbb{S}^{d-1}.
\end{align*}
It suffices to choose some \(w\in \mathbb{S}^{d-1}\) to make \(\|g(u;w)\|_{L^2(\mathbb{S}^{d-1})}\) as small as possible. Note that there are two cases of the choice of sampling points \(X = X_{I} \sqcup X_{B}\) that make the DNTK matrix 
\(\Theta^{\infty}_{\net}(X)\) nearly singular. The first case is that there is some point in \(X\) nearly zero since a zero point leads to a vanishing row in the Gram matrix \(\Theta^{\infty}_{\net}(X)\) (due to the absence of the bias term). The second case is that there are two 
points in \(X_I\) or \(X_B\) very close to each other. In the first case, we choose \(w=(1,0,\ldots,0)\)
suppose \(x_1\in \arg\min\left\{ (s)_2^2|x_p|^{2(s-2)}, |y_q|^{2s} \right\}\).  
Then, from the expression \eqref{E:g(u;w)}  of \(g(u;w)\), we have
\begin{align*}
	w^\top \Sigma^{\infty}_{\net}(X) w & = m_{d,s}
	\E_{u\sim \mathrm{U}(\mathbb{S}^{d-1})}\left[ \left| (s)_2 |x_1|^{s-2} \sigma_{s-2}(u^\top \ol{x}_1) \right|^2\right] \\
	& = m_{d,s} b_s^2(X)\E_{u\sim \mathrm{U}(\mathbb{S}^{d-1})}\left[ \left| \sigma_{s-2}(u^\top \ol{x}_1) \right|^2\right].
\end{align*}
By symmetry, we can assume \(u^\top \ol{x}_1 = u_1 = \cos\theta\). 
Since \(\d S^{d-1}(u) = (\sin \theta)^{d-2} \d\theta \d S^{d-2}\), using \eqref{E:area_sphere} and the relation 
$2\int_{0}^{\frac{\pi}{2}} (\cos\theta)^{2a-1} (\sin \theta)^{2b-1} ~ \d\theta
= \frac{\Gamma(a)\Gamma(b)}{\Gamma(a+b)}$, we have
\begin{align}\label{E:E[sigma(ux_1)^2]}
	&\E_{u\sim \mathrm{U}(\mathbb{S}^{d-1})}\left[ \left| \sigma_s(u^\top \ol{x}_1) \right|^2\right]
	 = \E_{u\sim \mathrm{U}(\mathbb{S}^{d-1})}\left[ (u_1)_+^{2s}\right]
	  = \frac{1}{\omega_{d-1}}\int_{\mathbb{S}^{d-1}} (u_1)_+^{2s} ~ \d S^{d-1}(u) \nonumber\\
	 =& \frac{1}{\omega_{d-1}}\int_{0}^{\pi} (\cos\theta)_+^{2s} ~ (\sin \theta)^{d-2} \d\theta \d S^{d-2} 
	 = \frac{\omega_{d-2}}{\omega_{d-1}} 
		\int_{0}^{\frac{\pi}{2}} (\cos\theta)^{2s} (\sin \theta)^{d-2} ~ \d\theta \nonumber\\
	 =& \frac{2\pi^{\frac{d-1}{2}}}{\Gamma(\frac{d-1}{2})}\frac{\Gamma(\frac{d}{2})}{2\pi^{\frac{d}{2}}}\cdot
		\frac{1}{2}\frac{\Gamma(s+\frac{1}{2})\Gamma(\frac{d-1}{2})}{\Gamma(s+\frac{d}{2})} 
	 = \frac{1}{2\sqrt{\pi}}\frac{\Gamma(\frac{d}{2})\Gamma(s+\frac{1}{2})}{\Gamma(s+\frac{d}{2})}.
\end{align}
Therefore, we derive 
\begin{align*}
	\lambda_{\min}\left(\Sigma^\infty_{\net}(X)\right) 
	\leq 2^s\frac{\Gamma(s+\frac{d}{2})}{\Gamma(\frac{d}{2})} 
		b_s^2(X) \frac{1}{2\sqrt{\pi}}\frac{\Gamma(\frac{d}{2})\Gamma(s-\frac{3}{2})}{\Gamma(s-2+\frac{d}{2})}
	= \frac{2^{s-1}}{\sqrt{\pi}}\frac{\Gamma(s+\frac{d}{2})\Gamma(s-\frac{3}{2})}{\Gamma(s-2+\frac{d}{2})} b_s^2(X).
\end{align*}
For the second term in \eqref{E:Inner-DNTK}, the function \(g_k(u;w)\) for \(k\in[d]\) 
given in \eqref{E:g_k(u;w)} satisfies
\begin{align*}
	g_k(u;w) &= \Delta\left( \sigma_s'(u^\top x_1)x_{1k} \right) 
	= x_{1k}\sigma_s^{(3)}(u^\top x_1)|u|^2 + 2\sigma_s^{(2)}(u^\top x_1)u_k, \\
	|g_k(u;w)|^2 &\leq 2x_{1k}^2\sigma_s^{(3)}(u^\top x_1)^2|u|^4 + 4\sigma_s^{(2)}(u^\top x_1)^2u_k^2.
\end{align*}
By summing the preceding inequality over \(k\in[d]\), we have 
\begin{align*}
	\sum_{k=1}^{d}| g_k(u;w) |^2 \leq 2|x_{1}|^2\sigma_s^{(3)}(u^\top x_1)^2|u|^4 + 4\sigma_s^{(2)}(u^\top x_1)^2|u|^2.
\end{align*}
Therefore, by equation \eqref{E:E[sigma(ux_1)^2]} we arrive at 
\begin{align*}
	 &\sum_{k=1}^{d}\E_{u} \left[ |g_k(u;w)|^2 \right]\\
	= &  \E_{u\sim\NN(0,I)} \left[ \sum_{k=1}^{d} |g_k(u;w)|^2 \right] 
		\leq \E_{u\sim\NN(0,I)} \left[ 2|x_{1}|^2\sigma_s^{(3)}(u^\top x_1)^2|u|^4 + 
		4\sigma_s^{(2)}(u^\top x_1)^2|u|^2 \right] \\
	=& 2(s)_2^2 |x_1|^{2(s-2)} \E_{|u|^2\sim \chi^2_d} \left[|u|^{2(s-1)}\right] 
		\E_{u\sim\mathrm{U}(\mathbb{S}^{d-1})} \left[ (s-2)^2\sigma_{s-3}(u^\top \ol{x}_1)^2 + 
		2\sigma_{s-2}(u^\top \ol{x}_1)^2 \right] \\
=	& 2 b_s^2(X) m_{d,s-1} 
		\left(\frac{(s-2)^2}{2\sqrt{\pi}}\frac{\Gamma(\frac{d}{2})\Gamma(s-\frac{5}{2})}{\Gamma(s-3+\frac{d}{2})}
		+ \frac{2}{2\sqrt{\pi}}\frac{\Gamma(\frac{d}{2})\Gamma(s-\frac{3}{2})}{\Gamma(s-2+\frac{d}{2})}\right) \\
	=& 2 b_s^2(X) 2^{s-1}\frac{\Gamma(s-1+\frac{d}{2})}{\Gamma(\frac{d}{2})}
		\left(\frac{(s-2)^2}{2\sqrt{\pi}}\frac{\Gamma(\frac{d}{2})\Gamma(s-\frac{5}{2})}{\Gamma(s-3+\frac{d}{2})}
		+ \frac{2}{2\sqrt{\pi}}\frac{\Gamma(\frac{d}{2})\Gamma(s-\frac{3}{2})}{\Gamma(s-2+\frac{d}{2})}\right) \\
	=& \frac{2^{s-1}}{\sqrt{\pi}}
		\left(\frac{(s-3+\frac{d}{2})(s-2)^2}{(s-\frac{5}{2})(s-1+\frac{d}{2})} + \frac{2}{s-1+\frac{d}{2}}\right)
		\frac{\Gamma(s+\frac{d}{2})\Gamma(s-\frac{3}{2})}{\Gamma(s-2+\frac{d}{2})} b_s^2(X).
\end{align*}
Since \(\frac{s-3+\frac{d}{2}}{s-1+\frac{d}{2}}, \frac{2}{s-1+\frac{d}{2}}\leq 1\), we have 
\begin{align*}
	\lambda_{\min}\left(\Theta^\infty_{\net}(X)\right) 
	\leq \frac{2^{s-1}}{\sqrt{\pi}} \frac{s^2-2s-1}{s-\frac{5}{2}}
	 \frac{\Gamma(s+\frac{d}{2})\Gamma(s-\frac{3}{2})}{\Gamma(s-2+\frac{d}{2})} b_s^2(X).
\end{align*}
Similarly, if \(y_1\in \arg\min\left\{ (s)_2^2|x_p|^{2(s-2)}, |y_q|^{2s} \right\}\), let \(w=(0,\ldots,0,1,0,\ldots,0)\)
with one lying at the \(N_1+1\)-th position. Then, for the first term in \eqref{E:Inner-DNTK}, we get 
\begin{align*}
	\lambda_{\min}\left(\Sigma^\infty_{\net}(X)\right) \leq w^\top \Sigma^{\infty}_{\net}(X) w 
	= m_{d,s} |y_1|^{2s} \E_{u\sim \mathrm{U}(\mathbb{S}^{d-1})}\left[ \left| \sigma_s(u^\top \ol{y}_1) \right|^2\right] 
	= \frac{2^{s-1}}{\sqrt{\pi}}\Gamma\left(s+\frac{1}{2}\right)b_s^2(X). 
\end{align*}
For the second term, we have 
\begin{align*}
	 \sum_{k=1}^{d}\E_{u} \left[ |g_k(u;w)|^2 \right] 
	& = \E_{u\sim\NN(0,I)} \left[ \sum_{k=1}^{d} |\sigma_s'(u^\top y_1)y_{1k}|^2 \right] 
	  = \E_{u\sim\NN(0,I)} \left[ \sigma_s'(u^\top y_1)^2 |y_{1}|^2 \right] \\
	& = |y_1|^{2s} \E_{|u|^2\sim \chi^2_d} \left[|u|^{2(s-1)}\right] \E_{u\sim \mathrm{U}(\mathbb{S}^{d-1})}
		\left[ s^2 \sigma_{s-1}(u^\top \ol{y}_1)^2 \right] \\
	& = b_s^2(X) 2^{s-1} \frac{\Gamma(s-1+\frac{d}{2})}{\Gamma(\frac{d}{2})} 
		\frac{s^2}{2\sqrt{\pi}}\frac{\Gamma(\frac{d}{2})\Gamma(s-\frac{1}{2})}{\Gamma(s-1+\frac{d}{2})} \\
	& = \frac{2^{s-1}}{\sqrt{\pi}}\frac{s^2}{2s-1}\Gamma\left(s+\frac{1}{2}\right)b_s^2(X).
\end{align*}
Thus, for \(y_1\in \arg\min\left\{ (s)_2^2|x_p|^{2(s-2)}, |y_q|^{2s} \right\}\) we have 
\begin{align*}
	\lambda_{\min}\left(\Theta^\infty_{\net}(X)\right) 
	\leq \frac{2^{s-1}}{\sqrt{\pi}}\frac{s^2+2s-1}{2s-1}\Gamma\left(s+\frac{1}{2}\right)b_s^2(X).
\end{align*}
Note that \(\frac{s^2-2s-1}{s-\frac{5}{2}}\geq \frac{s^2+2s-1}{2s-1}\) for \(s\geq 3\) and
\[\frac{\Gamma(s+\frac{d}{2})\Gamma(s-\frac{3}{2})}{\Gamma(s-2+\frac{d}{2})}\geq \Gamma\left(s+\frac{1}{2}\right)
= \left(s-\frac{1}{2}\right)\left(s-\frac{3}{2}\right)\Gamma\left(s-\frac{3}{2}\right),\]
since \(\frac{\Gamma(s+\frac{d}{2})}{\Gamma(s-2+\frac{d}{2})} = (s-1+\frac{d}{2})(s-2+\frac{d}{2})\) and \(d\geq 2\). 
Therefore, we derive 
\[ \lambda_{\min}\left(\Theta^\infty_{\net}(X)\right) 
	\leq \frac{2^{s-1}}{\sqrt{\pi}} \frac{s^2-2s-1}{s-\frac{5}{2}}
	\frac{\Gamma(s+\frac{d}{2})\Gamma(s-\frac{3}{2})}{\Gamma(s-2+\frac{d}{2})} b_s^2(X)
	\leq \frac{2^{s}s}{\sqrt{\pi}}\frac{\Gamma(s+\frac{d}{2})\Gamma(s-\frac{3}{2})}{\Gamma(s-2+\frac{d}{2})} b_s^2(X). \]
Next, we consider the second case. 
Suppose \(\epsilon_x = |\ol{x}_1-\ol{x}_2| = 
\min\left\{ |\ol{x}_i-\ol{x}_j|: i\neq j\in[N_1] \right\}\). 
Let the test vector \(w=[\alpha,\beta,0,\ldots,0]^\top\) with \(\alpha^2+\beta^2=1\). 
Then, we have  
\begin{align*}
	w^\top \Sigma^{\infty}_{\net}(X) w 
	& = m_{d,s} \E_{u\sim \mathrm{U}(\mathbb{S}^{d-1})} \left[ \left| \alpha
	\sigma_s^{(2)}(u^\top x_1) + \beta \sigma_s^{(2)}(u^\top x_2) \right|^2\right] \\
	& = m_{d,s} \E_{u\sim \mathrm{U}(\mathbb{S}^{d-1})} \left[ \left| 
		\alpha (s)_2|x_1|^{s-2}\sigma_{s-2}(u^\top \ol{x}_1) + 
		\beta (s)_2|x_2|^{s-2}\sigma_{s-2}(u^\top \ol{x}_2) \right|^2\right].
\end{align*}
We can choose \(\alpha,\beta\) so that 
\(\alpha (s)_2|x_1|^{s-2} = -\beta (s)_2|x_2|^{s-2} \), for which we also have 
\[|\alpha (s)_2|x_1|^{s-2}|\leq \max\left\{ (s)_2|x_p|^{s-2}, |y_q|^{s} \right\}=B_s(X).\]
Then we arrive at 
\begin{align*}
	w^\top \Sigma^{\infty}_{\net}(X) w & \leq m_{d,s} B_s^2(X)
	\E_{u\sim \mathrm{U}(\mathbb{S}^{d-1})} \left[ | 
	\sigma_{s-2}(u^\top \ol{x}_1) - \sigma_{s-2}(u^\top \ol{x}_2)|^2\right] \\
	& \leq m_{d,s} B_s^2(X)
	\E_{u\sim \mathrm{U}(\mathbb{S}^{d-1})} \left[ (s-2)^2 | 
	u^\top (\ol{x}_1 - \ol{x}_2)|^2\right] 
\end{align*}
using the estimate \(|\sigma_s(x)-\sigma_s(y)|\leq s|x-y| \) for \(s\geq 1\) and \(|x|,|y|\leq 1\).
For \(x\in \mathbb{S}^{d-1}\), by the symmetry of the integral, we have
\begin{align*}
	\E_{u\sim \mathrm{U}(\mathbb{S}^{d-1})} \left[ |u^\top x|^2\right]
	= \E_{u\sim \mathrm{U}(\mathbb{S}^{d-1})} \left[ u_1^2\right]
	= \frac{1}{d}\sum_{k=1}^{d} \E_{u\sim \mathrm{U}(\mathbb{S}^{d-1})} \left[ u_k^2\right]
	= \frac{1}{d}.
\end{align*}
Therefore, we have 
\begin{align*}
	\lambda_{\min}\left(\Sigma^\infty_{\net}(X)\right) \leq w^\top \Sigma^{\infty}_{\net}(X) w
	= \tfrac{1}{d}(s-2)^2 m_{d,s}B_s^2(X)\epsilon_x^2.
\end{align*}
Meanwhile, the function \(g_k(u;w)\) in \eqref{E:g_k(u;w)} satisfies
\begin{align*}
	g_k(u;w) = \alpha\left(x_{1k}\sigma_s^{(3)}(u^\top x_1)|u|^2 + 2\sigma_s^{(2)}(u^\top x_1)u_k\right)
	+ \beta\left(x_{2k}\sigma_s^{(3)}(u^\top x_2)|u|^2 + 2\sigma_s^{(2)}(u^\top x_2)u_k\right).
\end{align*}
Thus, we have 
\begin{align*}
	|g_k(u;w)|^2 \leq & 2\left|\alpha x_{1k}\sigma_s^{(3)}(u^\top x_1)|u|^2 
		+ \beta x_{2k}\sigma_s^{(3)}(u^\top x_2)|u|^2\right|^2\\
		&+ 8 \left| \alpha\sigma_s^{(2)}(u^\top x_1)u_k + \beta\sigma_s^{(2)}(u^\top x_2)u_k \right|^2 \\
	 =& 2 |u|^{2(s-1)}\left|\alpha x_{1k}\sigma_s^{(3)}(\ol{u}^\top x_1) 
		+ \beta x_{2k}\sigma_s^{(3)}(\ol{u}^\top x_2)\right|^2 \\
	&+ 8 u_k^2 |u|^{2(s-2)} \left| \alpha\sigma_s^{(2)}(\ol{u}^\top x_1) + \beta\sigma_s^{(2)}(\ol{u}^\top x_2) \right|^2.
\end{align*}
For the first term, by the choice of \(\alpha\) and \(\beta\), we have 
\begin{align*}
	&\left|\alpha x_{1k}\sigma_s^{(3)}(\ol{u}^\top x_1) + \beta x_{2k}\sigma_s^{(3)}(\ol{u}^\top x_2)\right|\\
	 =& (s-2)\left|\alpha (s)_2|x_1|^{s-2} \ol{x}_{1k}\sigma_{s-3}(\ol{u}^\top \ol{x}_1) + 
	\beta (s)_2|x_2|^{s-2} \ol{x}_{2k}\sigma_{s-3}(\ol{u}^\top \ol{x}_2)\right| \\
 \leq& (s-2)B_s(X)
	\left|\ol{x}_{1k}\sigma_{s-3}(\ol{u}^\top \ol{x}_1) - \ol{x}_{2k}\sigma_{s-3}(\ol{u}^\top \ol{x}_2)\right|,
\end{align*}
with \(\ol{x}_{ik} = x_{ik}/|x_{i}|\) for \(i=1,2\).
Furthermore, there holds
\begin{align*}
	&\left|\ol{x}_{1k}\sigma_{s-3}(\ol{u}^\top \ol{x}_1) - \ol{x}_{2k}\sigma_{s-3}(\ol{u}^\top \ol{x}_2)\right|\\
	 \leq &\left|\ol{x}_{1k}\sigma_{s-3}(\ol{u}^\top \ol{x}_1) - \ol{x}_{2k}\sigma_{s-3}(\ol{u}^\top \ol{x}_1) \right|
	+ \left|\ol{x}_{2k}\sigma_{s-3}(\ol{u}^\top \ol{x}_1) - \ol{x}_{2k}\sigma_{s-3}(\ol{u}^\top \ol{x}_2) \right|\\
 \leq &\left|\ol{x}_{1k} -  \ol{x}_{2k} \right| 
	+ |\ol{x}_{2k}|\left|\sigma_{s-3}(\ol{u}^\top \ol{x}_1) - \sigma_{s-3}(\ol{u}^\top \ol{x}_2) \right|.
\end{align*}
Therefore, we have 
\begin{align*}
	&\sum_{k=1}^{d}\left|\ol{x}_{1k}\sigma_{s-3}(\ol{u}^\top \ol{x}_1) - \ol{x}_{2k}\sigma_{s-3}(\ol{u}^\top \ol{x}_2)\right|^2\\
	 \leq &\sum_{k=1}^{d} 2\left|\ol{x}_{1k} -  \ol{x}_{2k} \right|^2 + 
	\sum_{k=1}^{d} 2|\ol{x}_{2k}|^2\left|\sigma_{s-3}(\ol{u}^\top \ol{x}_1) - \sigma_{s-3}(\ol{u}^\top \ol{x}_2) \right|^2\\
	=& 2\left|\ol{x}_{1} -  \ol{x}_{2} \right|^2 + 
		2\left|\sigma_{s-3}(\ol{u}^\top \ol{x}_1) - \sigma_{s-3}(\ol{u}^\top \ol{x}_2) \right|^2.
\end{align*}
Note that \(|\sigma_{s}(x) - \sigma_{s}(y)|\leq (s+1)|x-y|\) for \(s\geq 0\) and \(|x|,|y|\leq 1\).
Then, we have 
\begin{align*}
	&\sum_{k=1}^{d} \left|\alpha x_{1k}\sigma_s^{(3)}(\ol{u}^\top x_1) 
		+ \beta x_{2k}\sigma_s^{(3)}(\ol{u}^\top x_2)\right|^2\\ 
	 \leq &2(s-2)^2 B^2_{s}(X) ( |\ol{x}_{1} -  \ol{x}_{2} |^2 + |\sigma_{s-3}(\ol{u}^\top \ol{x}_1) - \sigma_{s-3}(\ol{u}^\top \ol{x}_2)|^2) \\
	 \leq &2(s-1)(s-2)^2 B^2_{s}(X) \left|\ol{x}_{1} -  \ol{x}_{2} \right|^2.
\end{align*}
Then, we have 
\begin{align*}
	\sum_{k=1}^{d}|g_k(u;w)|^2 
	 \leq& 4|u|^{2(s-1)} (s-1)(s-2)^2 B^2_{s}(X) \left|\ol{x}_{1} -  \ol{x}_{2} \right|^2 \\
     &  + 8|u|^{2(s-1)} \left| \alpha\sigma_s^{(2)}(\ol{u}^\top x_1) 
    + \beta\sigma_s^{(2)}(\ol{u}^\top x_2) \right|^2.
\end{align*}
Taking expectation gives
\begin{align*}
	&\E_{u\sim\NN(0,I)}\left[ \sum_{k=1}^{d}|g_k(u;w)|^2 \right] \\
	 \leq& 4m_{d,s-1}(s-1)(s-2)^2 B^2_{s}(X)\epsilon_x^2  + 8m_{d,s-1}\E_{u\sim \mathrm{U}(\mathbb{S}^{d-1})} 
	\left[ \left| \alpha\sigma_s^{(2)}(\ol{u}^\top x_1) + \beta\sigma_s^{(2)}(\ol{u}^\top x_2) \right|^2\right] \\
	 \leq& 4m_{d,s-1}(s-1)(s-2)^2 B^2_{s}(X)\epsilon_x^2 + 8m_{d,s-1} \frac{1}{d}(s-2)^2 B^2_{s}(X)\epsilon_x^2 
	 \leq 4m_{d,s-1}s(s-2)^2 B^2_{s}(X)\epsilon_x^2.
\end{align*}
Therefore, we have 
\begin{align*}
	\lambda_{\min}\left(\Theta^\infty_{\net}(X)\right)
	& \leq w^\top \Sigma^{\infty}_{\net}(X) w + \E_{u\sim\NN(0,I)}\left[ \sum_{k=1}^{d}|g_k(u;w)|^2 \right] \\
	& \leq \left(\frac{1}{d}m_{d,s} + 4sm_{d,s-1}\right) (s-2)^2 B^2_{s}(X)\epsilon_x^2 \\
	& = \left( \frac{1}{d}\cdot 2^s\frac{\Gamma(s+\frac{d}{2})}{\Gamma(\frac{d}{2})} + 
		4s\cdot 2^{s-1}\frac{\Gamma(s+\frac{d}{2})}{(s-1+\frac{d}{2})\Gamma(\frac{d}{2})} \right) (s-2)^2 B^2_{s}(X)\epsilon_x^2 \\
	& \leq 2^{s+2}(s-2)^2\frac{\Gamma(s+\frac{d}{2})}{\Gamma(\frac{d}{2})} B^2_{s}(X)\epsilon_x^2.
\end{align*}
Similarly, if \(\epsilon_y = |\ol{y}_1-\ol{y}_2| = \min\left\{ |\ol{y}_i-\ol{y}_j|: i\neq j\in[N_2] \right\}\), 
then let the test vector \(w=[0,\ldots,0,\alpha,\beta,0,\ldots,0]^\top\) with \(\alpha^2+\beta^2=1\) and 
\(\alpha,\beta\) located at the positions corresponding to \(y_1,y_2\) respectively. 
We can choose \(\alpha,\beta\) so that \(\alpha |y_1|^{s} = -\beta |y_2|^{s}\). 
Then, we have  
\begin{align*}
	w^\top \Sigma^{\infty}_{\net}(X) w & = m_{d,s} 
	\E_{u\sim \mathrm{U}(\mathbb{S}^{d-1})} [ | \alpha
	\sigma_s(u^\top y_1) + \beta \sigma_s(u^\top y_2) |^2]
	\leq \tfrac{1}{d}s^2 m_{d,s}B_s^2(X)\epsilon_y^2.
\end{align*}
Similarly, we have 
$g_k(u;w) = \alpha y_{1k}\sigma'_s(u^\top y_1) + \beta y_{2k}\sigma'_s(u^\top y_2).$ 
Thus, 
\begin{align*}
	\sum_{k=1}^{d} |g_k(u;w)|^2
	& \leq 2|u|^{2(s-1)} s^2 B^2_{s}(X) \left( \left|\ol{y}_{1} -  \ol{y}_{2} \right|^2 + 
		\left|\sigma_{s-1}(\ol{u}^\top \ol{y}_1) - \sigma_{s-1}(\ol{u}^\top \ol{y}_2) \right|^2 \right) \\
	& \leq 2|u|^{2(s-1)} s^3 B^2_{s}(X) \left|\ol{y}_{1} -  \ol{y}_{2} \right|^2.
\end{align*}
Therefore, we have 
\begin{align*}
	\lambda_{\min}\left(\Theta^\infty_{\net}(X)\right)
	& \leq w^\top \Sigma^{\infty}_{\net}(X) w + \E_{u\sim\NN(0,I)}\left[ \sum_{k=1}^{d}|g_k(u;w)|^2 \right] \\
	& \leq \left(\frac{1}{d}m_{d,s} + 2sm_{d,s-1}\right) s^2 B^2_{s}(X)\epsilon_y^2 
	\leq 2^{s+1}s^2\frac{\Gamma(s+\frac{d}{2})}{\Gamma(\frac{d}{2})} B^2_{s}(X)\epsilon_y^2.
\end{align*}

\subsection{Lower bound on the largest eigenvalue}
For \eqref{E:Inner-DNTK}, we have for any \(w\in \mathbb{S}^{N-1}\),  
\begin{align*}
	\lambda_{\max}\left(\Theta^\infty_{\net}(X)\right) & \geq w^\top \Theta^\infty_{\net}(X)w 
	\geq w^\top \Sigma^\infty_{\net}(X)w 
	= m_{d,s} \E_{u\sim \mathrm{U}(\mathbb{S}^{d-1})}\left[|g(u;w)|^2\right].
\end{align*}
Fix \(x_1\in \arg\max\left\{ (s)_2^2|x_p|^{2(s-2)}, |y_q|^{2s} \right\}\) and let \(w=(1,0,\ldots,0)\). Then, 
\begin{align*}
	w^\top \Sigma^{\infty}_{\net}(X) w 
	& = m_{d,s} \E_{u\sim \mathrm{U}(\mathbb{S}^{d-1})} \left[ \left| \sigma_s^{(2)}(u^\top x_1) \right|^2\right] 
	  = m_{d,s} B_s^2(X)
		\E_{u\sim \mathrm{U}(\mathbb{S}^{d-1})}\left[ \left| \sigma_{s-2}(u^\top \ol{x}_1) \right|^2\right] \\
	& = 2^s\frac{\Gamma(s+\frac{d}{2})}{\Gamma(\frac{d}{2})}
		B_s^2(X)\frac{1}{2\sqrt{\pi}}\frac{\Gamma(\frac{d}{2})\Gamma(s-\frac{3}{2})}{\Gamma(s-2+\frac{d}{2})}
	  = \frac{2^{s-1}}{\sqrt{\pi}} \frac{\Gamma(s+\frac{d}{2})\Gamma(s-\frac{3}{2})}{\Gamma(s-2+\frac{d}{2})} B_s^2(X).
\end{align*}
Similarly, if \(y_1\in \arg\max\left\{ (s)_2^2|x_p|^{2(s-2)}, |y_q|^{2s} \right\}\), let \(w=(0,\ldots,0,1,0,\ldots,0)\)
with one lying at the \(N_1+1\)-th position. Then, we get 
\begin{align*}
	&w^\top \Sigma^\infty_{\net}(X) w  = m_{d,s} B_s^2(X) 
	\E_{u\sim \mathrm{U}(\mathbb{S}^{d-1})}\left[ \left| \sigma_{s}(u^\top \ol{y}_1) \right|^2\right] \\
	 = &2^s\frac{\Gamma(s+\frac{d}{2})}{\Gamma(\frac{d}{2})} B_s^2(X) 
		\frac{1}{2\sqrt{\pi}}\frac{\Gamma(\frac{d}{2})\Gamma(s+\frac{1}{2})}{\Gamma(s+\frac{d}{2})} = \frac{2^{s-1}}{\sqrt{\pi}} \Gamma(s+\tfrac{1}{2}) B_s^2(X).
\end{align*}
Note that \(\frac{\Gamma(s+\frac{d}{2})\Gamma(s-\frac{3}{2})}{\Gamma(s-2+\frac{d}{2})}\geq \Gamma\left(s+\frac{1}{2}\right)\) as \(s\geq 3, d\geq 2\).
Therefore, we have 
\[ \lambda_{\max}\left(\Theta^\infty_{\net}(X)\right) \geq \lambda_{\max}\left(\Sigma^\infty_{\net}(X)\right)
   \geq \frac{2^{s-1}}{\sqrt{\pi}} \Gamma(s+\tfrac{1}{2}) B_s^2(X). \]

\subsection{Upper bound on the largest eigenvalue}
We first estimate \(\lambda_{\max}\left(\Sigma^\infty_{\net}(X)\right)\), corresponding to the first term in 
the expression of \(\Theta^\infty_{\net}(X)\) \eqref{E:Inner-DNTK}.
Note that 
\[ \lambda_{\max}\left(\Sigma^\infty_{\net}(X)\right) 
	= \sup_{w\in \mathbb{S}^{N-1}} w^\top \Sigma^\infty_{\net}(X) w
	= m_{d,s} \sup_{w\in \mathbb{S}^{N-1}} \E_{u\sim \mathrm{U}(\mathbb{S}^{d-1})}\left[|g(u;w)|^2\right]. \]
By the identity \eqref{E:g(u;w)} and the Cauchy-Schwarz inequality, we have 
\begin{align*}
	|g(u;w)|^2 &\leq \sum_{p=1}^{N_1} (s)_2^2|x_p|^{2(s-2)}\sigma_{s-2}(u^\top \ol{x}_p)^2 
	+ \sum_{q=1}^{N_2} |y_q|^{2s}\sigma_s(u^\top \ol{y}_q)^2 \\
	&\leq B_s^2(X) \bigg(\sum_{p=1}^{N_1} \sigma_{s-2}(u^\top \ol{x}_p)^2 + 
	\sum_{q=1}^{N_2} \sigma_s(u^\top \ol{y}_q)^2\bigg).
\end{align*}
By equation \eqref{E:E[sigma(ux_1)^2]} and \(\frac{\Gamma\left(s+\frac{1}{2}\right)}{\Gamma(s+\frac{d}{2})}\leq \frac{\Gamma(s-\frac{3}{2})}{\Gamma(s-2+\frac{d}{2})}\), we have 
\begin{align*}
	\E_{u\sim \mathrm{U}(\mathbb{S}^{d-1})}\left[|g(u;w)|^2\right] \leq &B_s^2(X) 
	\left( \frac{N_1}{2\sqrt{\pi}}\frac{\Gamma(\frac{d}{2})\Gamma(s-\frac{3}{2})}{\Gamma(s-2+\frac{d}{2})} 
	 + \frac{N_2}{2\sqrt{\pi}}\frac{\Gamma(\frac{d}{2})\Gamma(s+\frac{1}{2})}{\Gamma(s+\frac{d}{2})} \right)\\
	 \leq &\frac{N B_s^2(X)}{2\sqrt{\pi}}\frac{\Gamma(\frac{d}{2})\Gamma(s-\frac{3}{2})}{\Gamma(s-2+\frac{d}{2})}.
\end{align*}
Next, we estimate the second term in the expression of \(\Theta^\infty_{\net}(X)\). 
From \eqref{E:g_k(u;w)}, we have
\begin{align*}
	g_k(u;w) = \sum_{p=1}^{N_1} w^{I}_p \left( x_{pk} |u|^2 
	\sigma_s^{(3)}\left(u^\top x_p \right) + 2u_k\sigma_s^{(2)}\left(u^\top x_p \right) \right)
	+ \sum_{q=1}^{N_2} w^{B}_q \left(y_{qk}\sigma_s^{(1)}\left(u^\top y_q\right)\right).
\end{align*}
By the Cauchy-Schwarz inequality, we have for any \(w\in\mathbb{S}^{N-1}\), 
\begin{align*}
	|g_k(u;w)|_2^2 \leq \sum_{p=1}^{N_1} 2\left(x_{pk}^2 |u|^4 \sigma_s^{(3)}\left(u^\top x_p \right)^2
	+ 4u_k^2 \sigma_s^{(2)}\left(u^\top x_p \right)^2  \right)
	+ \sum_{q=1}^{N_2} y_{qk}^2\sigma_s^{(1)}\left(u^\top y_q\right)^2.
\end{align*}
By summing the inequality over \(k\in[d]\), we get for any \(w\in \mathbb{S}^{d-1}\),
\begin{align*}
	 \sum_{k=1}^{d} \left| g_k(u;w) \right|^2 
	\leq& \sum_{p=1}^{N_1} 2|x_p|^2 |u|^4 \sigma_s^{(3)}\left(u^\top x_p \right)^2 + 
	8|u|^2 \sigma_s^{(2)}\left(u^\top x_p \right)^2
	+ \sum_{q=1}^{N_2} |y_{q}|^2\sigma_s^{(1)}\left(u^\top y_q\right)^2 \\
	 = &\sum_{p=1}^{N_1} (s)_2^2|x_p|^{2(s-2)} |u|^{2(s-1)} \left(2(s-2)^{2}\sigma_{s-3}(\ol{u}^\top \ol{x}_p )^2 
	+8\sigma_{s-2}(\ol{u}^\top \ol{x}_p )^2 \right)\\
	&+ \sum_{q=1}^{N_2} |y_{q}|^{2s}|u|^{2(s-1)} s^2\sigma_{s-1}(\ol{u}^\top \ol{y}_q)^2.
\end{align*}
Taking expectation with respect to \(u\) gives
\begin{align*}
	\E_{u\sim \NN(0,I_d)} \left[ \sum_{k=1}^{d} | g_k(u;w) |^2 \right]
	& \leq m_{d,s-1} B_s^2(X) \E_{u\sim \mathrm{U}(\mathbb{S}^{d-1})} \left[ h(u) \right], 
\end{align*}
with the function \(h(u)\in L^2(\mathbb{S}^{d-1})\) given by
\[ h(u) = \sum_{p=1}^{N_1} \left(2(s-2)^{2}\sigma_{s-3}(\ol{u}^\top \ol{x}_p )^2 
	+8\sigma_{s-2}(u^\top \ol{x}_p )^2 \right)
	+ \sum_{q=1}^{N_2} s^2\sigma_{s-1}(u^\top \ol{y}_q)^2.  \]
By equation \eqref{E:E[sigma(ux_1)^2]} and the inequality \(\frac{\Gamma(s-\frac{1}{2})}{\Gamma(s-1+\frac{d}{2})}\leq \frac{\Gamma(s-\frac{3}{2})}{\Gamma(s-2+\frac{d}{2})}\leq \frac{\Gamma(s-\frac{5}{2})}{\Gamma(s-3+\frac{d}{2})}\), we deduce 
\begin{align*}
	& \E_{u\sim \mathrm{U}(\mathbb{S}^{d-1})} \left[ h(u) \right] \\
	& = \sum_{p=1}^{N_1} \left(2(s-2)^{2}\frac{1}{2\sqrt{\pi}}\frac{\Gamma(\frac{d}{2})\Gamma(s-\frac{5}{2})}{\Gamma(s-3+\frac{d}{2})}
	+8\frac{1}{2\sqrt{\pi}}\frac{\Gamma(\frac{d}{2})\Gamma(s-\frac{3}{2})}{\Gamma(s-2+\frac{d}{2})} \right)
	+ \sum_{q=1}^{N_2} s^2\frac{1}{2\sqrt{\pi}}\frac{\Gamma(\frac{d}{2})\Gamma(s-\frac{1}{2})}{\Gamma(s-1+\frac{d}{2})} \\
	& \leq N_1\left( \frac{(s-2)^2}{\sqrt{\pi}} 
	+ \frac{4}{\sqrt{\pi}} \right)\frac{\Gamma(\frac{d}{2})\Gamma(s-\frac{5}{2})}{\Gamma(s-3+\frac{d}{2})}
	+ N_2 \frac{s^2}{2\sqrt{\pi}}\frac{\Gamma(\frac{d}{2})\Gamma(s-\frac{5}{2})}{\Gamma(s-3+\frac{d}{2})} \\ 
    & \leq \frac{s(s-1)}{\sqrt{\pi}}\frac{\Gamma(\frac{d}{2})\Gamma(s-\frac{5}{2})}{\Gamma(s-3+\frac{d}{2})} N.
\end{align*}
Therefore, the largest eigenvalue $\lambda_{\max}(\Theta^\infty_f(X))$ of \(\Theta^\infty_{\net}(X)\) is upper bounded by
\begin{align*}
	\lambda_{\max}\left(\Theta^\infty_{\net}(X)\right) 
	& \leq \sup_{w\in \mathbb{S}^{N-1}} w^\top \Sigma^{\infty}_{\net}(X) w + 
	\sup_{w\in \mathbb{S}^{N-1}} \E_{u} \left[ \sum_{k=1}^{d} \left\| g_k(u;w) \right\|^2 \right] \\
	& \leq  m_{d,s} \frac{N B_s^2(X)}{2\sqrt{\pi}}\frac{\Gamma(\frac{d}{2})\Gamma(s-\frac{3}{2})}{\Gamma(s-2+\frac{d}{2})}
	  + m_{d,s-1} B_s^2(X) \frac{s(s-1)}{\sqrt{\pi}}\frac{\Gamma(\frac{d}{2})\Gamma(s-\frac{5}{2})}{\Gamma(s-3+\frac{d}{2})} N \\
	& = \left(\frac{2^{s-1}}{\sqrt{\pi}}\frac{\Gamma(s+\frac{d}{2})\Gamma(s-\frac{3}{2})}{\Gamma(s-2+\frac{d}{2})}
		+ \frac{2^{s-1}s(s-1)}{\sqrt{\pi}}\frac{\Gamma(s-1+\frac{d}{2})\Gamma(s-\frac{5}{2})}{\Gamma(s-3+\frac{d}{2})}\right)N B_s^2(X) \\
	& \leq \frac{2^{s-1}s^2}{\sqrt{\pi}}\frac{\Gamma(s+\frac{d}{2})\Gamma(s-\frac{5}{2})}{\Gamma(s-2+\frac{d}{2})} N B_s^2(X), 
\end{align*}
where we have used \(\Gamma(s-\frac{3}{2})=(s-\frac{5}{2})\Gamma(s-\frac{5}{2})\) and \(\frac{\Gamma(s-1+\frac{d}{2})}{\Gamma(s-3+\frac{d}{2})}\leq \frac{\Gamma(s+\frac{d}{2})}{\Gamma(s-2+\frac{d}{2})}\) for \(s\geq 3, d\geq 2\).

% \begin{comment}
% \section{Discussion} \label{S:Discussion}

% In this work, we theoretically study the spectrum of the DNTK, and 
% provide the explicit lower and upper bound for the extreme eigenvalues of the DNTK matrix, given an arbitrary set of samplings points, for the shallow RePU network without bias, for the Poisson equation with the Dirichlet boundary condition. 
% The method can be also applied to other symmetric PDEs like Helmholtz and 
% biharmonic equations, and to Neumann boundary condition for the spherical domain whose boundary is a centered sphere.
% %The following are some inadequacy in this paper that needs to further study in the furture. 
% First, the eigenvalue bounds estimates of the infinite DNTK matrices are restricted to shallow network without bias term and symmetric PDEs. In constrast, people often use DNNs with bias term.
% %due to the benefits in various aspects,  
% e.g., increasing the expressivity of networks or the positivity of DNTK \cite{JinWu:2026DNTK}. 
% Moreover, instead of the RePU activation,  smooth activations like softplus, sigmoid and hyperbolic tangent functions are quite popular in machine learning tasks. Meanwhile, non-symmetric and nonlinear PDEs are commonly encountered in scientific problems, thus it is also of great interest to develop the analysis in these cases.
% \end{comment}

% \appendix

%\section*{Acknowledgments} We would like to acknowledge the assistance 

\bibliographystyle{abbrv}
\bibliography{reference}

\end{document}